\newtheorem{thm}{Theorem}[section]
\newtheorem{lem}[thm]{Lemma}
\newtheorem{cor}[thm]{Corollary}
\newtheorem{conj}[thm]{Conjecture}
\newtheorem{rmk}[thm]{Remark}
\newtheorem{fac}[thm]{Fact}
\newtheorem{thm-con}[thm]{Theorem-Conjecture}
\numberwithin{equation}{section}
\theoremstyle{definition}
\newcommand{\x}{{\tt X}}
\newcommand{\f}{\Bbb F}
\begin{document}

\title{Permutation Polynomials of $\Bbb F_{q^2}$ of the form $a\x+\x^{r(q-1)+1}$}

\author[Xiang-dong Hou]{Xiang-dong Hou}
\address{Department of Mathematics and Statistics,
University of South Florida, Tampa, FL 33620}
\email{xhou@usf.edu}

\keywords{finite field, $p$-adic field, permutation polynomial, resultant}

\subjclass[2000]{}

\begin{abstract}
Let $q$ be a prime power, $2\le r\le q$, and $f=a\x+\x^{r(q-1)+1}\in\Bbb F_{q^2}[\x]$, where $a\ne 0$. The conditions on $r,q,a$ that are necessary and sufficient for $f$ to be a permutation polynomial (PP) of $\f_{q^2}$ are not known. (Such conditions are known under an additional assumption that $a^{q+1}=1$.) In this paper, we prove the following: (i) If $f$ is a PP of $\f_{q^2}$, then $\text{gcd}(r,q+1)>1$ and $(-a)^{(q+1)/\text{gcd}(r,q+1)}\ne 1$. (ii) For a fixed $r>2$ and  subject to the conditions that $q+1\equiv 0\pmod r$ and $a^{q+1}\ne 1$, there are only finitely many $(q,a)$ for which $f$ is a PP of $\f_{q^2}$. Combining (i) and (ii) confirms a recent conjecture regarding the type of permutation binomial considered here.
\end{abstract}

\maketitle

\section{Introduction}

Let $\f_q$ denote the finite field with $q$ elements. A polynomial $f\in\f_q[\x]$ is called a {\em permutation polynomial} (PP) of $\f_q$ if it induces a permutation of $\f_q$. In general, it is difficult to predict the permutation property of a polynomial from its algebraic appearance; even for binomials, the question remains challenging. Historical accounts and contemporary reviews of permutation polynomials can be found in a few book chapters and recent survey papers; see  \cite{Hou-Fq11-2015}, \cite{Hou-FFA-2015}, \cite[Ch. 7]{Lidl-Niederreiter-1997}, \cite[Ch. 8]{Mullen-Panario-2013}.

In this paper, we are concerned with the following question. Let $f=a\x+\x^{r(q-1)+1}\in\f_{q^2}[\x]$, where $2\le r\le q$ and $a\in\f_{q^2}^*$. When is $f$ a PP of $\f_{q^2}$? Our interest and curiosity in this question were inspired and elevated by several recent results. Under the assumption that $a^{q+1}=1$, the answer to question was provided by Zieve \cite{Zieve-arXiv1310.0076}.

\begin{thm}[\cite{Zieve-arXiv1310.0076}]\label{T1.1} 
Assume that $a^{q+1}=1$. Then $f$ is a PP of $\f_{q^2}$ if and only if $(-a)^{(q+1)/\text{\rm gcd}(r,q+1)}\ne 1$ and $\text{\rm gcd}(r-1,q+1)=1$.
\end{thm}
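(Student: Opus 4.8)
The plan is to reduce the permutation property of $f$ to that of a simple map on the group $\mu_{q+1}=\{z\in\f_{q^2}^*:z^{q+1}=1\}$ of $(q+1)$-st roots of unity. Write $f=\x\bigl(a+(\x^{q-1})^r\bigr)=\x\,g(\x^{q-1})$ with $g=a+\x^r$, and note $q-1=(q^2-1)/(q+1)$. The well-known reduction lemma for polynomials of the shape $\x^s g(\x^{(q^2-1)/d})$ then applies: since $\gcd(1,q-1)=1$ trivially, $f$ permutes $\f_{q^2}$ if and only if $\psi\colon z\mapsto z\,(a+z^r)^{q-1}$ permutes $\mu_{q+1}$. (By the usual convention this already incorporates the requirement that $a+z^r\ne 0$ for every $z\in\mu_{q+1}$, for otherwise $\psi$ takes the value $0\notin\mu_{q+1}$.)

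Next I would exploit the hypothesis $a^{q+1}=1$. For $z\in\mu_{q+1}$ we have $z^q=z^{-1}$, and $a^{q+1}=1$ gives $a^q=a^{-1}$; hence, whenever $a+z^r\ne 0$,
\[
(a+z^r)^{q-1}=\frac{a^q+z^{rq}}{a+z^r}=\frac{a^{-1}+z^{-r}}{a+z^r}=\frac{1}{a z^r},
\]
so that $\psi(z)=a^{-1}z^{1-r}$, a monomial map on $\mu_{q+1}$ multiplied by the constant $a^{-1}\in\mu_{q+1}$. It remains to determine when (a) $a+z^r\ne 0$ for all $z\in\mu_{q+1}$, and (b) $\psi$ permutes $\mu_{q+1}$.

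For (a): since $(-a)^{q+1}=(-1)^{q+1}a^{q+1}=1$ we have $-a\in\mu_{q+1}$, the $r$-th power map on the cyclic group $\mu_{q+1}$ has image its unique subgroup of order $(q+1)/\gcd(r,q+1)$, and a fixed element lies in that subgroup precisely when its $(q+1)/\gcd(r,q+1)$-st power equals $1$; hence (a) is equivalent to $(-a)^{(q+1)/\gcd(r,q+1)}\ne 1$. For (b): multiplication by $a^{-1}\in\mu_{q+1}$ is a bijection of $\mu_{q+1}$, so $\psi$ permutes $\mu_{q+1}$ if and only if $z\mapsto z^{1-r}$ does, i.e.\ if and only if $\gcd(1-r,q+1)=\gcd(r-1,q+1)=1$. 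Combining (a) and (b) gives the asserted equivalence.

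I do not expect a genuine obstacle: the argument is short once the reduction lemma is in hand. The one point to keep in mind is that the collapse of $\psi$ to a monomial map is entirely an artifact of $a^{q+1}=1$, which forces $(a+z^r)^{q-1}=(az^r)^{-1}$. When $a^{q+1}\ne 1$, $\psi$ is instead a rational function of degree $\ge 2$ on $\mu_{q+1}$, and deciding when such a map permutes $\mu_{q+1}$ is exactly the difficulty confronted in the rest of the paper; under the present hypothesis that difficulty is absent, and only the gcd bookkeeping in (a) and (b) needs attention.
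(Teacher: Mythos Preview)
Your argument is correct. It is, in fact, essentially the original proof of Zieve referred to in the paper: reduce via the standard ``$\x^s g(\x^{(q^2-1)/d})$'' lemma to the map $z\mapsto z(a+z^r)^{q-1}$ on $\mu_{q+1}$, then use $a^{q+1}=1$ to collapse this to the degree-one map $z\mapsto a^{-1}z^{1-r}$ on $\mu_{q+1}$, after which only the cyclic-group bookkeeping in your (a) and (b) remains.

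The paper, however, gives a \emph{different} proof (see Remark~2.3): it combines Fact~2.1 with Lemma~2.2, which evaluates the power sums $S_q(\alpha,a)$ explicitly under the hypothesis $a^{q+1}=1$. Lemma~2.2 shows that $S_q(\alpha,a)$ is nonzero precisely when $(\alpha+1)(r-1)\equiv 0\pmod{q+1}$ for some $0\le\alpha\le q-1$, and this happens for no such $\alpha$ if and only if $\gcd(r-1,q+1)=1$; Fact~2.1 separately handles the condition $(-a)^{(q+1)/\gcd(r,q+1)}\ne 1$. Your route is shorter and more conceptual for this particular theorem, since the monomial collapse makes the permutation question on $\mu_{q+1}$ trivial. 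The paper's power-sum route is longer here but is the one that generalizes: the entire machinery of Sections~2--4 (and the proofs of Theorems~1.4 and~1.5) is built on these sums $S_q(\alpha,a)$, which remain computable without the assumption $a^{q+1}=1$, whereas your monomial collapse disappears the moment that assumption is dropped --- exactly as you note in your final paragraph.
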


The proof of Theorem~\ref{T1.1} given in \cite{Zieve-arXiv1310.0076} uses degree one rational functions over $\f_{q^2}$ which permute the $(q+1)$-th roots of unity in $\f_{q^2}$. For that proof, the assumption that $a^{q+1}=1$ is essential. In the general situation, that is, without the assumption that $a^{q+1}=1$, the question
has been answered for $r=2,3,5,7$ in \cite{Hou-arXiv1404.1822, Hou-Lappano-JNT-2015, Lappano-ppt-2014}. It turns out that for $r=3,5,7$ and $a\in\f_{q^2}^*$ with $a^{q+1}\ne 1$, there are numerous but {\em finitely many} $(q,a)$ for which $f$ is a PP of $\f_{q^2}$. A conjecture has been formulated based on this observation:

\begin{conj}[\cite{Hou-FFA-2015, Lappano-ppt-2014}]\label{C1.2}
Let $r>2$ be a fixed prime. Under the assumption that $a^{q+1}\ne 1$ ($a\in\f_{q^2}^*$), there are only finitely many $(q,a)$ for which $f$ is a PP of $\f_{q^2}$.
\end{conj}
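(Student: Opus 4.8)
The plan is to split Conjecture~\ref{C1.2} into the two statements announced in the abstract and combine them. Write them as: (i) if $f$ is a PP of $\f_{q^2}$ then $\gcd(r,q+1)>1$ and $(-a)^{(q+1)/\gcd(r,q+1)}\ne 1$; and (ii) for a fixed $r>2$, among the $(q,a)$ with $r\mid q+1$ and $a^{q+1}\ne 1$ only finitely many make $f$ a PP. Granting these, the conjecture is immediate: when $r>2$ is prime, $a^{q+1}\ne 1$ and $f$ is a PP, (i) forces $\gcd(r,q+1)>1$, hence $r\mid q+1$ since $r$ is prime, and then (ii) applies. The common starting point is the standard reduction (Park--Lee; Zieve) applied to $f=\x\,h(\x^{q-1})$ with $h(\x)=a+\x^r$: $f$ permutes $\f_{q^2}$ iff $g(\x):=\x\,h(\x)^{q-1}$ permutes $\mu_{q+1}$, the group of $(q+1)$-st roots of unity (in particular $h$ must not vanish on $\mu_{q+1}$, which is exactly $(-a)^{(q+1)/\gcd(r,q+1)}\ne 1$, the second half of (i)). On $\mu_{q+1}$ one has $z^q=z^{-1}$, so $g(z)=(a^q z^r+1)/\bigl(z^{r-1}(a+z^r)\bigr)$, a rational map of degree $2r-1$ acting on $\mu_{q+1}$. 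The first half of (i) — that $\gcd(r,q+1)=1$ precludes $g$ permuting $\mu_{q+1}$ once $a^{q+1}\ne1$ — I would obtain by a direct analysis: for $\gcd(r,q+1)=1$ the substitution $w=z^r$ is a bijection of $\mu_{q+1}$ and rewrites the permutation condition in a form where $a^{q+1}\ne1$ makes an explicit collision (or a counting/product identity) available. This is elementary compared with (ii).

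For (ii), assume $r\mid q+1$ and $a^{q+1}\ne 1$ (and, as we may, $(-a)^{(q+1)/r}\ne1$, else $f$ is not a PP), and suppose $f$ is a PP with $q$ large. Then $g$ permutes $\mu_{q+1}$, so the ``deleted difference'' $\bigl(g(u)-g(v)\bigr)/(u-v)$ has no zero $(u,v)\in\mu_{q+1}\times\mu_{q+1}$ with $u\ne v$. Clearing denominators, these zeros lie on an affine curve $\mathcal{X}=\{P(u,v)=0\}$ of bidegree bounded in terms of $r$. Two structural points make $\mathcal{X}$ usable. First, the coefficients appearing can be symmetrized in $a$ and $a^q$ and hence expressed through $s=a+a^q$ and $n=a^{q+1}\in\f_q$; resultants are the natural instrument for carrying out this elimination of $a^q$ and for producing explicit defining equations. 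Second, on $\mu_{q+1}$ the map $g$ satisfies $g^{(q)}(z^{-1})=g(z)^{-1}$ (superscript $(q)$ denoting the $q$-power twist of the coefficients), which is precisely the descent datum needed to pass from $\mathcal{X}$, a priori only over $\f_{q^2}$, to a smooth projective model $\mathcal{C}$ over $\f_q$ — a suitable twist — whose $\f_q$-points, apart from an $O_r(1)$ set of spurious ones (the ramification of $g$ and the points at infinity), correspond to genuine collisions $(u,v)\in\mu_{q+1}^2$, $u\ne v$.

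The decisive step is to show that $\mathcal{C}$ — equivalently $\mathcal{X}$ over $\overline{\f_q}$ — is geometrically irreducible of genus $\mathfrak{g}=O_r(1)$, for every $(q,a)$ with $a^{q+1}\ne 1$ outside an exceptional set that does not grow with $q$. Granting this, the Hasse--Weil bound gives $\#\mathcal{C}(\f_q)\ge q+1-2\mathfrak{g}\sqrt q$, so for $q$ past a threshold depending only on $r$ there is a true collision, contradicting that $f$ is a PP; this yields (ii). Establishing the geometric irreducibility is where the work concentrates and where the $p$-adic input enters: rather than wrestle with the reduction mod $p$ directly, regard $\mathcal{X}$ as the reduction of a curve defined over a number field or a finite extension of $\mathbb{Q}_p$, with $s,n$ as parameters, and prove absolute irreducibility and compute the genus there, where Newton-polygon and Hensel-type arguments pin down the components and their fields of definition; then invoke the standard facts that absolute irreducibility and genus persist upon reduction at all but finitely many primes and upon specializing $(s,n)$ away from a proper closed subset. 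The subtle and crucial point — the main obstacle — is to show that once $r>2$ is fixed, the locus of parameters with $n\ne1$ over which $\mathcal{C}$ degenerates (becomes reducible over $\overline{\f_q}$, acquires no geometrically irreducible $\f_q$-component, or has too large a genus) reduces, over all characteristics, to finitely many primes; equivalently, that the pertinent discriminants and resultants in $s,n$ computed in characteristic $0$ are not identically zero on $\{n\ne1\}$ and stay non-vanishing there after reduction at all but finitely many $p$. This is exactly where the hypothesis $a^{q+1}\ne1$ is used — $n=1$ is the regime of Theorem~\ref{T1.1} — and where $r>2$ matters: for $r=2$ the reduced map is of special (Rédei/Dickson) type for infinitely many $(q,a)$, so no such statement can hold.

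Finally, the leftover cases contribute only finitely many pairs $(q,a)$: the finitely many excluded primes $p$ (for each, a direct check in characteristic $p$ that $\mathcal{C}$ remains geometrically irreducible for $n\ne1$, or the inequality $r\le q$, bounds the admissible $q=p^k$), and the values of $q$ below the Weil threshold (finitely many $q$, each with finitely many $a$). Hence (i) and (ii) hold, and combined as above they give Conjecture~\ref{C1.2}, consistent with the finite though numerous families already found for $r=3,5,7$.
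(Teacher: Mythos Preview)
Your high-level split into (i) and (ii) matches the paper's Theorems~1.4 and~1.5, and the reduction to $g(z)=z\,h(z)^{q-1}$ on $\mu_{q+1}$ is standard. But from that point on the paper does something entirely different from what you propose. It does \emph{not} build a collision curve or invoke Hasse--Weil. Instead it computes the power sums $\sum_{x}f(x)^{\alpha+(q-1-\alpha)q}$ directly (Section~2): for $\alpha+1\equiv 0\pmod r$ and $q$ large, this sum vanishes iff an explicit polynomial $g_\alpha\in(\Bbb Z[1/r])[\x]$ vanishes at the single element $v=(-a)^{-(q+1)q/r}$. Since $a^{q+1}\ne 1$ forces $v^r\ne 1$, a PP makes $v$ a common root of all the $h_\alpha=\x^{-1}g_\alpha$. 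The paper then proves $\gcd_{\Bbb Q[\x]}\{h_\alpha\}=1$ by evaluating $g_\alpha$ at a handful of carefully chosen $\alpha$ (built from an auxiliary prime power $\tau\equiv -1\pmod r$) and taking resultants of the resulting low-degree polynomials in $x$; this bounds the characteristic. Further explicit resultants bound $q$ for each $p\notin\{2,3,5\}$, and $p\in\{2,3,5\}$ are dispatched by direct computation. The ``$p$-adic'' content is only the arithmetic of $\binom{i/r}{j}\in\Bbb Z[1/r]$ and a lift to $\Bbb Q_2$ in the $p=2$ case, not any irreducibility argument.

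Your route has a genuine gap at exactly the point you flag as ``decisive.'' For the Weil argument to prove (ii), the collision curve must be absolutely irreducible for \emph{every} $a$ with $n=a^{q+1}\ne 1$, not merely for generic $(s,n)$: if each large $q$ still carried even $O_r(1)$ degenerate values of $a$, you would have infinitely many pairs $(q,a)$ left over. Your phrasing ``discriminants \dots\ not identically zero on $\{n\ne 1\}$'' is therefore too weak; what you need is that the degeneration locus in the $(s,n)$-plane is \emph{contained in} $\{n=1\}$, in characteristic $0$ and after reduction at all but finitely many $p$. You offer no mechanism for this. Recall that the collision variety of a rational map factors whenever the map decomposes (and more generally its components are governed by the monodromy group); at $n=1$ the map $g$ collapses to a R\'edei-type form---this is the regime of Theorem~\ref{T1.1}---but nothing you wrote explains why $n\ne 1$ forces indecomposability and doubly transitive monodromy for \emph{all} parameter values and all $r>2$. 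The Newton-polygon/Hensel remarks are too vague to supply this. The paper's power-sum approach sidesteps the whole monodromy question: it produces, for each $\alpha$, an explicit univariate constraint on the single quantity $v$, so coprimality of finitely many of these constraints already forces finiteness, and that coprimality is checked by concrete resultant computations rather than by geometry.
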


\begin{rmk}\label{R1.3}\rm 
Conjecture~\ref{C1.2} is false for $r=2$. It follows from \cite[Theorems A and B]{Hou-arXiv1404.1822} that $a\x+\x^{2(q-1)+1}$ ($a\in\f_{q^2}^*$) is a PP of $\f_{q^2}$ if and only if $q$ is odd and $(-a)^{(q+1)/2}=-1$ or $3$.
\end{rmk}

In the present paper, we confirm Conjecture~\ref{C1.2} by proving the following.

\begin{thm}\label{T1.4}
Assume that $f$ is a PP of $\f_{q^2}$. Then $\text{\rm gcd}(r,q+1)>1$ and $(-a)^{(q+1)/\text{\rm gcd}(r,q+1)}\ne 1$. In particular, if $r$ is a prime, then $q+1\equiv 0\pmod r$ and $(-a)^{(q+1)/r}\ne 1$.
\end{thm}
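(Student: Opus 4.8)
The plan is to analyze the permutation property of $f = a\x + \x^{r(q-1)+1}$ on $\f_{q^2}$ by exploiting the multiplicative structure of $\f_{q^2}^*$ relative to the subgroup $\mu_{q+1}$ of $(q+1)$-th roots of unity. The key observation is that $\x^{r(q-1)+1} = \x \cdot \x^{r(q-1)} = \x \cdot (\x^{q-1})^r$, and for $x \in \f_{q^2}^*$ the element $x^{q-1}$ lies in $\mu_{q+1}$. So writing $x = y z$ with $z \in \mu_{q+1}$ and $y$ in a fixed set of coset representatives (say $y$ ranging over $\f_q^*$ up to the obvious identifications, or more precisely using the norm map), one has $f(x) = a y z + y z \cdot z^{r(q-1)}$; tracking how $f$ behaves on each coset $y\mu_{q+1}$ reduces the permutation question to a statement about the maps $z \mapsto z(a + z^{r(q-1)})$ on $\mu_{q+1}$, up to scaling. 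This is the standard Park–Lee / Akbary–Wang / Wan–Lidl type criterion, and I would invoke it to convert "PP of $\f_{q^2}$" into a combination of (a) a gcd condition ensuring a certain associated map is a bijection of $\mu_{q+1}$ and (b) a non-vanishing condition ensuring $f$ has no nonzero root.

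First I would establish the necessity of $(-a)^{(q+1)/d}\ne 1$, where $d = \gcd(r, q+1)$, by the contrapositive: show that if $(-a)^{(q+1)/d} = 1$ then $f$ has a nonzero root in $\f_{q^2}$, hence is not injective (since $f(0)=0$ as well). Concretely, $f(x) = 0$ with $x \ne 0$ means $a = -x^{r(q-1)} = -(x^{q-1})^r$; since $x^{q-1}$ ranges exactly over $\mu_{q+1}$ as $x$ ranges over $\f_{q^2}^*$, the image of $z \mapsto -z^r$ on $\mu_{q+1}$ is precisely the coset $-\mu_{q+1}^{(q+1)/d}$ — wait, more carefully, it is $-(\mu_{q+1})^r = -\mu_{(q+1)/d}$, the set of $a$ with $(-a)^{(q+1)/d} = 1$. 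So $f$ has a nonzero root exactly when $(-a)^{(q+1)/d} = 1$, giving the stated necessary condition immediately.

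Next, for the condition $d = \gcd(r, q+1) > 1$, I would argue again by contrapositive: suppose $\gcd(r, q+1) = 1$. The goal is to produce two distinct elements of $\f_{q^2}$ with the same image. I expect the cleanest route is to count: when $\gcd(r,q+1)=1$, the map $z\mapsto z^r$ permutes $\mu_{q+1}$, and one should be able to show that the "reduced" map induced by $f$ on the coset space $\f_{q^2}^*/\mu_{q+1}$ fails to be well-defined as a permutation — equivalently, that $f$ collapses some coset $y\mu_{q+1}$ to fewer than $q+1$ values, or sends two cosets into overlapping images. A slicker alternative: use the known criterion that $f$ is a PP iff $g(\x) := \x \cdot h(\x^{q-1})^{?}$ permutes $\mu_{q+1}$ for the appropriate auxiliary polynomial $h$, and observe that this auxiliary map, being essentially $z \mapsto z^{\,\ell}\cdot(\text{unit})$ for some exponent $\ell$ tied to $r$ and $r-1$, cannot be a bijection of $\mu_{q+1}$ when the relevant gcd with $q+1$ is trivial in the wrong way; Theorem 1.1 already exhibits exactly this phenomenon ($\gcd(r-1,q+1)=1$ required there) and the case $a^{q+1}\ne 1$ should only make bijectivity harder, not easier.

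The main obstacle will be the $\gcd(r, q+1) > 1$ part in the general case $a^{q+1}\ne 1$: unlike Zieve's setting, we cannot use degree-one rational functions permuting $\mu_{q+1}$, so the reduction of $f$ to $\mu_{q+1}$ produces a genuinely higher-degree self-map of $\mu_{q+1}$ whose bijectivity is not transparent. I anticipate the argument must instead extract from "$f$ is a PP" some algebraic identity — most naturally via a resultant or a Hermite-type criterion — forcing a common factor of $r$ and $q+1$; the resultant approach (as flagged by the paper's keywords) would compute $\operatorname{Res}_\x(f(\x)-f(c\x), \cdot)$ or the resultant tied to injectivity and show a prime dividing $\gcd(r,q+1)$ must appear. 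Handling the interaction of this with the condition $a^{q+1}\ne 1$ — ensuring the obstruction does not get absorbed into the freedom in $a$ — is the delicate point, and I would expect to need the structure of $\f_{q^2}$ over $\f_q$ (the Frobenius $x\mapsto x^q$) in an essential way, pairing $x$ with $x^q$ to linearize part of the problem.
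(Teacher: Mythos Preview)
Your treatment of the condition $(-a)^{(q+1)/d}\ne 1$ is correct and coincides with the paper's Fact~2.1: a nonzero root of $f$ exists precisely when $-a$ lies in the image of $z\mapsto z^r$ on $\mu_{q+1}$, i.e., when $(-a)^{(q+1)/d}=1$.

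For $\gcd(r,q+1)>1$, however, your proposal does not contain a proof. You correctly diagnose that the reduction to $\mu_{q+1}$ gives a self-map whose bijectivity is opaque when $a^{q+1}\ne 1$, and you then list resultants and ``Hermite-type'' as fallback ideas without executing either. The paper's argument is in fact a one-line Hermite computation, but at a very specific exponent that your plan never isolates: one takes $s=(q-1)q$, i.e., computes $\sum_{x\in\f_{q^2}}f(x)^{(q-1)q}$. In the paper's parametrization this is the case $\alpha=0$ of~(2.3), so only $i=0$ contributes and the constraint becomes $-1-rj\equiv 0\pmod{q+1}$ with $0\le j\le q-1$. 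If $\gcd(r,q+1)=1$ there is a \emph{unique} such $j_0$ in $\{0,\dots,q\}$, and one checks $j_0\ne q$ (else $r\equiv 1\pmod{q+1}$, impossible for $2\le r\le q$). The sum then collapses to the single term $\binom{q-1}{j_0}a^{-j_0 q}=(-1)^{j_0}a^{-j_0 q}\ne 0$, contradicting the PP property.

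So the gap is not a wrong idea but a missing one: you gesture at Hermite without identifying the exponent at which the power sum degenerates to a single nonzero term. Your alternative route through the $\mu_{q+1}$-criterion, as you yourself note, does not close; and the remark that ``the case $a^{q+1}\ne 1$ should only make bijectivity harder'' is a heuristic, not an argument. The paper sidesteps the $\mu_{q+1}$ analysis entirely for this half of the theorem.
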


\begin{thm}\label{T1.5}
Let $r>2$ be fixed. Assume that $q\ge r$, $q+1\equiv 0\pmod r$ and $a^{q+1}\ne 1$ ($a\in\f_{q^2}^*$). Subject to these conditions, there are only finitely many $(q,a)$ for which $f$ is a PP of $\f_{q^2}$.
\end{thm}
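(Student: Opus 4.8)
The plan is to turn the permutation property into a statement about an explicit plane curve of bounded degree, and then to pit the Weil bound against the hypothesis. By the standard criterion for polynomials of the form $\x\,h(\x^{q-1})$, applied with $h(\x)=\x^r+a$, $f$ is a PP of $\f_{q^2}$ if and only if $G(\x):=\x(\x^r+a)^{q-1}$ permutes $\mu_{q+1}$, the group of $(q+1)$-th roots of unity in $\f_{q^2}$. Since $a^{q+1}\ne1$, the element $-a$ does not lie in $\mu_{q+1}$, so on $\mu_{q+1}$ (where $\x^q=\x^{-1}$) the map $G$ is the pole-free rational function $R(\x)=\frac{1+a^q\x^r}{\x^{r-1}(\x^r+a)}$; hence $f$ is a PP of $\f_{q^2}$ exactly when $R$ is injective on $\mu_{q+1}$. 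Clearing denominators in $R(x)=R(y)$ and cancelling the obvious factor $x-y$ gives
\begin{equation*}
Q(x,y)=(a+a^qx^ry^r)\,s_{r-2}(x,y)+s_{2r-2}(x,y)-a^{q+1}x^{r-1}y^{r-1},\qquad s_k(x,y):=\tfrac{y^{k+1}-x^{k+1}}{y-x},
\end{equation*}
a polynomial of bidegree $(2r-2,2r-2)$, and $f$ is a PP of $\f_{q^2}$ if and only if $Q(x,y)\ne0$ for all $x\ne y$ in $\mu_{q+1}$. From $s_k(1/x,1/y)=s_k(x,y)/(xy)^k$ one obtains the identity $(xy)^{2r-2}Q^{(q)}(1/x,1/y)=Q(x,y)$, where $Q^{(q)}$ is $Q$ with its coefficients raised to the $q$-th power.

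Next I would invoke that $\mu_{q+1}$ is the set of $\f_q$-points of the one-dimensional torus $T/\f_q$ with $T(\f_q)=\mu_{q+1}$, and that the identity above says exactly that the curve $\{Q=0\}\subset T\times T$ is stable under the Frobenius $\tau\colon(x,y)\mapsto(x^{-q},y^{-q})$ of $T\times T$. By Galois descent it descends to a curve $\mathcal C_a$ over $\f_q$ with $\mathcal C_a(\f_q)=\{(x,y)\in\mu_{q+1}^2:Q(x,y)=0\}$, so that $f$ is a PP of $\f_{q^2}$ iff every $\f_q$-point of $\mathcal C_a$ lies on the diagonal. Since the diagonal is itself $\tau$-stable and $\{Q=0\}$ has degree and geometric genus bounded in terms of $r$ alone, the Weil bound shows that if $\{Q=0\}$ possesses a $\tau$-stable geometrically irreducible component off the diagonal, then $\mathcal C_a$ has at least $q+1-O_r(\sqrt q)$ rational points, all but $O_r(1)$ of them off the diagonal, and $f$ is not a PP. Hence there is a bound $q_0(r)$ such that, for $q>q_0(r)$ and $a^{q+1}\ne1$, a PP $f$ forces $Q$ to be reducible with $\tau$ permuting the non-diagonal geometric components of $\{Q=0\}$ without a fixed point (note $\{Q=0\}$ cannot itself be irreducible, as its bidegree exceeds that of the diagonal).

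The heart of the matter — and the one point where $r>2$ is indispensable — is to show that this last configuration arises for only finitely many pairs $(q,a)$ with $a^{q+1}\ne1$; for $r=2$ it arises for infinitely many, since then $Q$ splits into two $\tau$-interchanged factors of bidegree $(1,1)$ whenever $a^{q+1}=9$, which is precisely the origin of the counterexamples in Remark~\ref{R1.3}. Treating $a$ and $a^q$ as independent indeterminates $\alpha,\beta$, one has $\widetilde Q(x,y;\alpha,\beta)=(\alpha+\beta x^ry^r)s_{r-2}+s_{2r-2}-\alpha\beta\,x^{r-1}y^{r-1}$, and I would classify the possible factorization types of $\widetilde Q$ over $\overline{\mathbb Q(\alpha,\beta)}$ — and, for the finitely many small characteristics at which reduction might behave differently, over $\overline{\f_p}(\alpha,\beta)$, where passing to the ring of integers of an unramified extension of $\mathbb Q_p$ lets one argue uniformly. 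For each type, demanding that the specialization $(\alpha,\beta)\mapsto(a,a^q)$ yield a factorization with no $\tau$-fixed non-diagonal component pins $a$ to the solution set of a system of polynomial equations over $\f_q$ — the polynomials being produced by resultant and discriminant eliminations — and one shows this solution set is empty or of size bounded independently of $q$; the verification that these defining polynomials are not identically zero is where the hypothesis $r>2$ is used. In the few reducible cases surviving this analysis the point set $\mathcal C_a(\f_q)$ is supported on the finitely many intersection points of the $\tau$-interchanged components, and comparing its cardinality with the injectivity requirement for $R$ again forces $f$ not to be a PP once $q>q_0(r)$. Together with the trivial finiteness of $\f_{q^2}^*$ for the finitely many $q\le q_0(r)$, this gives the theorem. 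The main obstacle will be exactly this last step — enumerating the factorization types of $\widetilde Q$ and carrying out the resultant computations that certify the non-degeneracy available for $r>2$; the reduction and the descent in the first two paragraphs are routine by comparison.
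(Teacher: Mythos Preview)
Your approach --- reducing to injectivity of a rational map on $\mu_{q+1}$, forming the difference curve $\{Q=0\}$, descending via the torus Frobenius $\tau$, and invoking the Weil bound --- is a genuine alternative to the paper's method, and the reduction in your first two paragraphs is set up correctly. The paper does something entirely different: it computes the power sums $\sum_{x}f(x)^{\alpha+(q-1-\alpha)q}$ explicitly (Lemma~2.4), extracts from them a family of one--variable polynomials $h_\alpha\in(\Bbb Z[1/r])[\x]$ that must share a common root $v=(-a)^{-(q+1)q/r}$ whenever $f$ is a PP, proves $\gcd_{\Bbb Q[\x]}\{h_\alpha\}=1$ by explicit resultant computations (Lemmas~4.1--4.5), and then pushes further resultants to bound first $p$ and then $q$ (Lemmas~4.6--4.9). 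The two routes share only the final ``eliminate via resultants'' flavor.

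The real problem with your proposal is the case you yourself flag as the main obstacle, and your handling of it is not merely incomplete but appears to be mistaken. When $Q$ factors with \emph{no} $\tau$-fixed absolutely irreducible component, you say the $\f_q$-points of $\mathcal C_a$ lie among the finitely many intersection points of $\tau$-interchanged components and that ``comparing its cardinality with the injectivity requirement for $R$'' forces $f$ not to be a PP. But a small point count is exactly what is \emph{compatible} with $R$ being injective: injectivity needs zero off-diagonal points, not many. Indeed, for $r=2$ and $a^{q+1}=9$ this very configuration occurs (two $\tau$-swapped bidegree-$(1,1)$ factors) and $f$ \emph{is} a PP for every odd $q$ --- so no cardinality comparison can rule it out. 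For $r>2$ you would have to prove either that this configuration never arises when $a^{q+1}\ne1$ and $q$ is large, or that it forces an off-diagonal $\tau$-fixed intersection point onto $\mu_{q+1}^2$; neither is argued. Your preceding step (``pins $a$ to the solution set of a system \dots\ of size bounded independently of $q$'') does not rescue this either: bounding the number of bad $a$'s for each fixed $q$ still leaves infinitely many pairs $(q,a)$ as $q$ ranges. This is precisely the difficulty the paper overcomes by its long chain of explicit polynomial identities and resultants in $r$, culminating in the numerical gcd $2\cdot3^2\cdot5$ and the case-by-case elimination of $p\in\{2,3,5\}$; your outline supplies no substitute for that work.
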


It is well known that $f$ is a PP of $\f_{q^2}$ if and only $0$ is the unique root of $f$ in $\f_{q^2}$ and 
\[
\sum_{x\in\f_{q^2}}f(x)^s=0\qquad\text{for all}\ 1\le s\le q^2-2.
\]
The starting point of the paper is the computation of the power sums $\sum_{x\in\f_{q^2}}f(x)^s$, which is carried out in Section 2. Theorem~\ref{T1.4}, proved in Section 3, is a straightforward consequence of the computation in Section 2. The proof Theorem~\ref{T1.5} is rather involved; it is given in Section 4 preceded by a description of the proof strategy.

\section{The Power Sums}

Throughout the paper, we always assume that $f=a\x+\x^{r(q-1)+1}\in\f_{q^2}[\x]$, where $2\le r\le q$ and $a\ne 0$. All other conditions are considered additional. The following fact is obvious.

\begin{fac}\label{F2.1}\rm
$0$ is the only root of $f$ in $\f_{q^2}$ if and only if $(-a)^{(q+1)/\text{gcd}(r,q+1)}\ne 1$.
\end{fac}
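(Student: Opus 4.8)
The plan is to reduce everything to the cyclic structure of $\f_{q^2}^*$. For $x\in\f_{q^2}^*$ we have $f(x)=x(a+x^{r(q-1)})$, so $f$ has a root in $\f_{q^2}^*$ if and only if $-a$ lies in the image of the map $\varphi\colon\f_{q^2}^*\to\f_{q^2}^*$, $x\mapsto x^{r(q-1)}$. Since $\f_{q^2}^*$ is cyclic of order $q^2-1=(q-1)(q+1)$, the image of $\varphi$ is the unique subgroup of order $(q^2-1)/\gcd\bigl(r(q-1),\,q^2-1\bigr)$.

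Next I would compute this gcd. Using $\gcd(au,av)=a\gcd(u,v)$ with $a=q-1$, $u=r$, $v=q+1$, we get $\gcd\bigl(r(q-1),(q-1)(q+1)\bigr)=(q-1)\gcd(r,q+1)$, so the image of $\varphi$ is the subgroup $H\le\f_{q^2}^*$ of order $(q+1)/\gcd(r,q+1)$. Because $\f_{q^2}^*$ is cyclic and $H$ is its unique subgroup of that order, an element $b\in\f_{q^2}^*$ lies in $H$ if and only if $b^{(q+1)/\gcd(r,q+1)}=1$. Applying this with $b=-a$: $f$ has a nonzero root in $\f_{q^2}$ if and only if $(-a)^{(q+1)/\gcd(r,q+1)}=1$. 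Consequently $0$ is the only root of $f$ in $\f_{q^2}$ if and only if $(-a)^{(q+1)/\gcd(r,q+1)}\ne 1$, which is the claim.

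There is no genuine obstacle here; as the label ``Fact'' suggests, the statement is a routine consequence of the cyclicity of $\f_{q^2}^*$. The only point requiring a moment's care is the identity $\gcd\bigl(r(q-1),(q-1)(q+1)\bigr)=(q-1)\gcd(r,q+1)$, together with the standard description of the unique subgroup of a prescribed order in a cyclic group as the set of solutions of $b^{d}=1$.
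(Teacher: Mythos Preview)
Your proof is correct. The paper does not give a proof at all, calling the statement ``obvious''; your argument is exactly the standard justification one would supply, so there is nothing to compare.
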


For $0\le \alpha,\beta\le q-1$ with $(\alpha,\beta)\ne (0,0)$, we have
\[
\begin{split}
\sum_{x\in\f_{q^2}}f(x)^{\alpha+\beta q}\,&=\sum_{x\in\f_{q^2}^*}x^{\alpha+\beta q}(a+x^{r(q-1)})^{\alpha+\beta q}\cr
&=\sum_{x\in\f_{q^2}^*}x^{\alpha+\beta q}\sum_{i,j}\binom\alpha i\binom\beta j a^{\alpha+\beta q-i-jq}x^{r(q-1)(i+j q)}\cr
&=a^{\alpha+\beta q}\sum_{i,j}\binom\alpha i\binom\beta j a^{-i-j q}\sum_{x\in\f_{q^2}^*}x^{\alpha+\beta q+r(q-1)(i-j)}.
\end{split}
\]
In the above, the inner sum is $0$ unless $\alpha+\beta q\equiv 1\pmod{q-1}$, i.e., $0\le\alpha\le q-1$ and $\beta=q-1-\alpha$, in which case,
\[
\begin{split}
&\sum_{x\in\f_{q^2}}f(x)^{\alpha+(q-1-\alpha)q}\cr
=\,&a^{(\alpha+1)(1-q)}\sum_{i,j}\binom\alpha i\binom{q-1-\alpha}ja^{-i-jq}\sum_{x\in\f_{q^2}^*}x^{(q-1)[-\alpha-1+r(i-j)]}\cr
=\,&-a^{(\alpha+1)(1-q)}\sum_{-\alpha-1+r(i-j)\equiv 0\,(\text{mod}\,q+1)}\binom\alpha i\binom{q-1-\alpha}ja^{-i-jq}.
\end{split}
\]
As $i$ runs over the interval $[0,\alpha]$ and $j$ over the interval $[0,q-1-\alpha]$, the range of $-\alpha-1+r(i-j)$ is 
\begin{equation}\label{2.1}
I_\alpha:=\bigl[r\alpha-\alpha-1-r(q-1),\; r\alpha-\alpha-1\bigr].
\end{equation}
Thus
\begin{equation}\label{2.2}
\sum_{x\in\f_{q^2}}f(x)^{\alpha+(q-1-\alpha)q}=-a^{(\alpha+1)(1-q)}S_q(\alpha,a),
\end{equation}
where
\begin{equation}\label{2.3}
S_q(\alpha,a)=\sum_{-\alpha-1+r(i-j)\in I_\alpha\cap(q+1)\Bbb Z}\binom\alpha i\binom{q-1-\alpha}j a^{-i-jq}.
\end{equation}

\begin{lem}\label{L2.2}
Assume that $a^{q+1}=1$ but $(-a)^{(q+1)/d}\ne 1$, where $d=\text{\rm gcd}(r,q+1)$. Then, for $0\le\alpha\le q-1$, 
\begin{equation}\label{2.4}
S_q(\alpha,a)=
\begin{cases}
a^{-\alpha-1}&\text{if}\ (\alpha+1)(r-1)\equiv 0\pmod {q+1},\cr
0&\text{otherwise}.
\end{cases}
\end{equation}
\end{lem}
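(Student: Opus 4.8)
The plan is to recognize $S_q(\alpha,a)$ as a partial sum of the coefficients of a Laurent polynomial over $\f_{q^2}$ and to extract it by a roots‑of‑unity filter. Since $a^{q+1}=1$ one has $a^{-i-jq}=a^{j-i}$, so
\[
S_q(\alpha,a)=\sum_{\substack{0\le i\le\alpha,\ 0\le j\le q-1-\alpha\\ r(i-j)\equiv\alpha+1\ (\mathrm{mod}\ q+1)}}\binom\alpha i\binom{q-1-\alpha}j a^{j-i}=\sum_{rk\equiv\alpha+1\ (\mathrm{mod}\ q+1)}c_k,
\]
where $c_k$ is the coefficient of $T^k$ in $g(T):=(1+a^{-1}T)^\alpha(1+aT^{-1})^{q-1-\alpha}\in\f_{q^2}[T,T^{-1}]$; here the binomial coefficients vanish outside $0\le i\le\alpha$, $0\le j\le q-1-\alpha$, and the support of $g$ lies in $[-(q-1-\alpha),\alpha]$, which is exactly the condition $-\alpha-1+rk\in I_\alpha$ in $(\ref{2.3})$. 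Put $d=\gcd(r,q+1)$ and $m=(q+1)/d$. If $d\nmid\alpha+1$ the index set is empty and $S_q(\alpha,a)=0$; a quick valuation argument (any prime $p_0\mid d$ divides $r$, hence not $r-1$, so $v_{p_0}((\alpha+1)(r-1))=v_{p_0}(\alpha+1)<v_{p_0}(q+1)$) shows $(\alpha+1)(r-1)\not\equiv0\pmod{q+1}$ in this case, so the stated formula holds.

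Now assume $d\mid\alpha+1$, so $\{k:rk\equiv\alpha+1\ (\mathrm{mod}\ q+1)\}=k_0+m\Bbb Z$ for a fixed $k_0$. Since $\gcd(m,q)=1$ there is a primitive $m$th root of unity $\omega\in\mu_{q+1}\subseteq\f_{q^2}$, and the filter gives
\[
S_q(\alpha,a)=\frac1m\sum_{t=0}^{m-1}\omega^{-tk_0}\,g(\omega^t).
\]
The heart of the proof is the evaluation of $g(\omega^t)$. Setting $b=a^{-1}\omega^t$, one has $1+aT^{-1}\big|_{T=\omega^t}=1+b^{-1}$, so $g(\omega^t)=b^{-(q-1-\alpha)}(1+b)^{q-1}$. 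Now $b^{q+1}=a^{-(q+1)}\omega^{t(q+1)}=1$ (using $\omega^{q+1}=1$), hence $b^q=b^{-1}$ and $(1+b)^q=1+b^{-1}$ by the Frobenius; moreover $b\ne-1$, since $b=-1$ would force $\omega^t=-a$ and thus $(-a)^m=(\omega^m)^t=1$, contradicting $(-a)^{(q+1)/d}\ne1$. Therefore $(1+b)^{q-1}=(1+b^{-1})/(1+b)=b^{-1}$, and $g(\omega^t)=b^{-(q-\alpha)}=a^{q-\alpha}\omega^{-t(q-\alpha)}=a^{-\alpha-1}\omega^{-t(q-\alpha)}$, using $a^q=a^{-1}$.

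Substituting, $S_q(\alpha,a)=a^{-\alpha-1}m^{-1}\sum_{t=0}^{m-1}\omega^{-t(k_0+q-\alpha)}$, which equals $a^{-\alpha-1}$ if $k_0+q-\alpha\equiv0\pmod m$ and $0$ otherwise; since $q\equiv-1\pmod m$ this reads $k_0\equiv\alpha+1\pmod m$. It remains to translate this into the stated condition. From $rk_0\equiv\alpha+1\pmod{q+1}$ and $rm=(r/d)(q+1)\equiv0\pmod{q+1}$: if $k_0=\alpha+1+m\ell$ then $\alpha+1\equiv rk_0\equiv r(\alpha+1)\pmod{q+1}$, i.e. $(\alpha+1)(r-1)\equiv0\pmod{q+1}$; conversely, if $(\alpha+1)(r-1)\equiv0\pmod{q+1}$ then $r(k_0-\alpha-1)=rk_0-r(\alpha+1)\equiv(\alpha+1)-r(\alpha+1)=-(\alpha+1)(r-1)\equiv0\pmod{q+1}$, whence $m=(q+1)/d$ divides $k_0-\alpha-1$. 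This completes the proof. The only genuinely delicate steps are the closed‑form evaluation of $g(\omega^t)$—which is exactly where the hypothesis $(-a)^{(q+1)/d}\ne1$ is used, to rule out $b=-1$—and the careful bookkeeping of congruences modulo $q+1$ versus modulo $m$.
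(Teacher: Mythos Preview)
Your proof is correct and takes a genuinely different route from the paper's. The paper proceeds combinatorially: it pins down the set $\bigl[\alpha+1+(I_\alpha\cap(q+1)\Bbb Z)\bigr]\cap r\Bbb Z$ as an arithmetic progression of length $d$ or $d-1$, shows the length is $d-1$ precisely when $(\alpha+1)(r-1)\equiv0\pmod{q+1}$, then applies a Vandermonde convolution to collapse the double sum to the geometric sum $\sum_{l\in L}(-a)^{-(q+1)l/d}$, which vanishes when $|L|=d$ and equals $-1$ when $|L|=d-1$. Your argument instead packages the sum as coefficients of the Laurent polynomial $g(T)=(1+a^{-1}T)^\alpha(1+aT^{-1})^{q-1-\alpha}$, extracts the desired residue class by a roots-of-unity filter modulo $m=(q+1)/d$, and evaluates $g(\omega^t)$ in closed form via the Frobenius identity $(1+b)^{q-1}=b^{-1}$ for $b\in\mu_{q+1}\setminus\{-1\}$. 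The hypothesis $(-a)^{(q+1)/d}\ne1$ enters for you exactly to guarantee $b\ne-1$, whereas in the paper it ensures the $d$-term geometric sum vanishes. Your approach is shorter and more conceptual (it avoids the delicate bookkeeping of the interval $I_\alpha$ and the case split on $|L|$), at the cost of invoking the filter machinery; the paper's approach, while longer, keeps everything at the level of explicit index manipulations and makes the structure of the sum as a truncated geometric series visible.
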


\begin{proof}
We only have to consider the case $\alpha+1\equiv 0\pmod d$. Let $k_0\in\Bbb Z$ be the smallest such that $\alpha+1+k_0(q+1)\in r\Bbb Z$ and 
\begin{equation}\label{2.5}
k_0(q+1)\ge r\alpha-\alpha-1-r(q-1).
\end{equation}
Then 
\begin{equation}\label{2.6}
\Bigl(k_0-\frac rd\Bigr)(q+1)<r\alpha-\alpha-1-r(q-1).
\end{equation}
We derive from \eqref{2.5} and \eqref{2.6} that 
\[
\begin{cases}
\displaystyle\Bigl(k_0+(d-2)\frac rd\Bigr)(q+1)< r\alpha-\alpha-1,\vspace{3mm}\cr
\displaystyle\Bigl(k_0+d\frac rd\Bigr)(q+1)> r\alpha-\alpha-1.
\end{cases}
\]
Therefore
\begin{equation}\label{2.7}
\bigl[\alpha+1+\bigl(I_\alpha\cap(q+1)\Bbb Z\bigr)\bigr]\cap r\Bbb Z=\alpha+1+k_0(q+1)+\frac{r(q+1)}dL,
\end{equation}
where $L=\{0,\dots,d-1\}$ or $\{0,\dots,d-2\}$.

If $L=\{0,\dots,d-2\}$, then
\[
\Bigl(k_0+(d-1)\frac rd\Bigr)(q+1)>r-\alpha-1,
\]
which is equivalent to 
\begin{equation}\label{2.8}
\alpha+1+k_0(q+1)+\frac{d-1}d(q+1)r\ge r\alpha+r.
\end{equation}
Also note that \eqref{2.6} is equivalent to 
\begin{equation}\label{2.9}
\alpha+1+k_0(q+1)-\frac 1d(q+1)r\le r\alpha-rq.
\end{equation}
Taking the difference of \eqref{2.8} and \eqref{2.9}, we see that the equal sign holds in both \eqref{2.8} and \eqref{2.9}. Hence
\begin{equation}\label{2.10}
\alpha+1+k_0(q+1)=\Bigl(\alpha+1-\frac{d-1}d(q+1)\Bigr)r,
\end{equation}
which further implies that $(\alpha+1)(r-1)\equiv 0\pmod {q+1}$. On the other hand, if $(\alpha+1)(r-1)\equiv 0\pmod{q+1}$ and $k_0$ is given by \eqref{2.10}, reversing the above argument gives $L=\{0,\dots,d-2\}$. Thus we have proved that $L=\{0,\dots,d-2\}$ if and only if $(\alpha+1)(r-1)\equiv 0\pmod{q+1}$.

Now we have 
\[
\begin{split}
S_q(\alpha,a)\,&=\sum_{-\alpha-1+r(i-j)\in I_\alpha\cap(q+1)\Bbb Z}\binom\alpha i\binom{q-1-\alpha}j a^{-i+j}\kern 1.1cm\text{(since $a^{q+1}=1$)}\cr
&=\sum_{i-j\in\frac 1r(\alpha+1+k_0(q+1))+\frac{q+1}dL}\binom\alpha i\binom{q-1-\alpha}j a^{-i+j}\kern 0.75cm\text{(by \eqref{2.7})}\cr
&=\sum_{\alpha-i+j\in \alpha-\frac 1r(\alpha+1+k_0(q+1))-\frac{q+1}dL}\binom\alpha{\alpha-i}\binom{q-1-\alpha}ja^{-i+j}\cr
&=\sum_{l\in L}a^{-\frac 1r(\alpha+1+k_0(q+1))-\frac{q+1}dl}\binom{q-1}{\alpha-\frac 1r(\alpha+1+k_0(q+1))-\frac{q+1}dl}\cr
&=\sum_{l\in L}a^{-\frac 1r(\alpha+1+k_0(q+1))-\frac{q+1}dl}(-1)^{\alpha-\frac 1r(\alpha+1+k_0(q+1))-\frac{q+1}dl}\cr
&=(-1)^\alpha(-a)^{-\frac 1r(\alpha+1+k_0(q+1))}\sum_{l\in L}(-a)^{-\frac{q+1}dl}.
\end{split}
\]
(For the next-to-last step, note that $\alpha-\frac 1r(\alpha+1+k_0(q+1))-\frac{q+1}dl\in \alpha-\frac 1r(\alpha+1+I_\alpha)=[0,q-1]$ by \eqref{2.7} and \eqref{2.1}.) If $(\alpha+1)(r-1)\not\equiv 0\pmod{q+1}$, then $L=\{0,\dots,d-1\}$ and $\sum_{l\in L}(-a)^{-\frac{q+1}dl}=0$. If $(\alpha+1)(r-1)\equiv 0\pmod {q+1}$, then $L=\{0,\dots,d-2\}$ and hence 
\[
\begin{split}
S_q(\alpha,a)\,&=(-1)^{\alpha+1}(-a)^{-\frac 1r(\alpha+1+k_0(q+1))}(-a)^{-\frac{q+1}d(d-1)}\cr
&=(-1)^{\alpha+1}(-a)^{-(\alpha+1-\frac{d-1}d(q+1))-\frac{q+1}d(d-1)}\kern1.2cm \text{(by \eqref{2.10})}\cr
&=a^{-\alpha-1}.
\end{split}
\]
\end{proof}

\begin{rmk}\label{R2.3}\rm
Note that $\text{gcd}(r-1,q+1)=1$ if and only if there is no $0\le \alpha\le q-1$ such that $(\alpha+1)(r-1)\equiv 0\pmod{q+1}$. Hence Lemma~\ref{L2.2} and Fact~\ref{F2.1} provide an alternate proof of Theorem~\ref{T1.1}.
\end{rmk}

\begin{lem}\label{L2.4}
Assume that $q+1\equiv 0\pmod r$, $0\le \alpha\le q-1$, $\alpha+1\equiv 0\pmod r$ and $q\ge(r-1)(\alpha+2)$. Then
\begin{equation}\label{2.11}
S_q(\alpha,a)=(-a)^{\frac{\alpha+1}rq}\Bigl(\sum_{l=0}^{r-1}v^l\Bigr)g_\alpha(v),
\end{equation}
where $v=(-a)^{-\frac{q+1}rq}$ and
\begin{equation}\label{2.12}
g_\alpha(\x)=(\x-1)\sum_{l=0}^{r-1}\x^l\sum_{i=1}^\alpha(-1)^i\binom\alpha i\binom{i+\alpha-\frac{\alpha+1}r+\frac lr}\alpha\frac{\x^{ri}-1}{\x^r-1}+(-1)^\alpha.
\end{equation}
\end{lem}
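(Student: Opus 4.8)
The plan is to start from the defining sum \eqref{2.3} and carry out the same kind of manipulation used in the proof of Lemma~\ref{L2.2}, but now tracking the powers of $a$ rather than collapsing them via $a^{q+1}=1$. First I would set $d=r$ (legitimate since $r\mid q+1$), locate the smallest integer $k_0$ with $\alpha+1+k_0(q+1)\in r\Bbb Z$ and $k_0(q+1)\ge r\alpha-\alpha-1-r(q-1)$, and verify, exactly as in \eqref{2.5}--\eqref{2.7}, that the relevant index set is
\[
\bigl[\alpha+1+(I_\alpha\cap(q+1)\Bbb Z)\bigr]\cap r\Bbb Z=\alpha+1+k_0(q+1)+\frac{q+1}r\,r\,L,
\]
with $L=\{0,\dots,r-1\}$ or $\{0,\dots,r-2\}$. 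The hypothesis $q\ge(r-1)(\alpha+2)$ should be precisely what is needed to pin down $k_0$ (and hence $L$) unambiguously in this regime; I expect $k_0$ to work out to a clean closed form like $k_0=-(\alpha+1)(r-1)/r$ or similar, so that $\frac1r(\alpha+1+k_0(q+1))$ becomes an explicit affine expression in $\alpha$.

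Next I would substitute $i\mapsto\alpha-i$ in one binomial factor (using $\binom\alpha i=\binom\alpha{\alpha-i}$ and the Vandermonde-type identity $\binom{q-1-\alpha}{j}$ with $j=i-(\text{shift})$) to rewrite $S_q(\alpha,a)$ as a sum over $l\in L$ and over $i$ of terms $a^{(\cdots)}\binom\alpha i\binom{q-1-\alpha}{\,i-c-\frac{q+1}r l\,}$ for the appropriate constant $c$. Writing $q-1-\alpha\equiv -\alpha-1+(\text{multiple of }q+1)$ inside the lower binomial and expanding $\binom{q-1-\alpha}{m}$ via the upper negation $\binom{q-1-\alpha}{m}=(-1)^m\binom{m+\alpha+1-(q+1)?}{m}$ is the step that produces the $(-1)$ signs and the factor $\binom{i+\alpha-\frac{\alpha+1}r+\frac lr}{\alpha}$ appearing in \eqref{2.12}. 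The bookkeeping that separates the $i=0$ term (which yields the lone $(-1)^\alpha$ at the end of $g_\alpha$) from the $i\ge1$ terms, and that factors out $(-a)^{\frac{\alpha+1}r q}$ together with the geometric factor $\sum_{l=0}^{r-1}v^l$, is routine but must be done carefully; the ratio $\frac{\x^{ri}-1}{\x^r-1}$ in \eqref{2.12} is exactly what records the dependence of the exponent of $v$ on $l$ once $l$ ranges over $L$ and the "missing" value $l=r-1$ (when $L=\{0,\dots,r-2\}$) is added back and subtracted.

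The main obstacle I anticipate is the careful handling of the two cases $L=\{0,\dots,r-1\}$ versus $\{0,\dots,r-2\}$ and showing that the single clean formula \eqref{2.11}--\eqref{2.12} covers both uniformly. In Lemma~\ref{L2.2} these two cases gave genuinely different answers ($0$ versus $a^{-\alpha-1}$); here the claim is that writing things in terms of $v=(-a)^{-\frac{q+1}r q}$ and the polynomial $g_\alpha$ absorbs the dichotomy, because $(-a)^{q+1}\ne1$ is \emph{not} assumed, so $\sum_{l=0}^{r-1}v^l$ need not vanish and $v^r\ne1$ in general. Concretely I would: (a) derive the sum over $l\in\{0,\dots,r-2\}$ directly; (b) add and subtract the $l=r-1$ term to extend to $\{0,\dots,r-1\}$, which is where the factor $\sum_{l=0}^{r-1}v^l$ is born and where $\frac{\x^{ri}-1}{\x^r-1}=\sum_{t=0}^{i-1}\x^{rt}$ lets me reorganize the double sum over $(i,l)$; and (c) check that the leftover boundary contributions collapse to the constant $(-1)^\alpha$, using the constraint $q\ge(r-1)(\alpha+2)$ to guarantee no index wraps around and no extra lattice points enter $I_\alpha$. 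Once the combinatorial identity underlying $g_\alpha$ is isolated, verifying it is a finite manipulation with binomial coefficients and the geometric series, independent of $q$.
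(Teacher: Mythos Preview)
Your plan imports the $k_0/L$ machinery of Lemma~\ref{L2.2}, but under the hypothesis $q\ge(r-1)(\alpha+2)$ that machinery is unnecessary and, as you have set it up, misleading. A direct check shows
\[
I_\alpha\cap(q+1)\Bbb Z=\{-(r-1)(q+1),\dots,-(q+1),0\},
\]
i.e.\ exactly $r$ multiples of $q+1$, so there is no $L=\{0,\dots,r-2\}$ case to worry about; the dichotomy you flag as the ``main obstacle'' simply does not occur here. (Correspondingly your guess $k_0=-(\alpha+1)(r-1)/r$ is wrong: one gets $k_0=-(r-1)$, independent of $\alpha$.) With $j=\frac1r(l(q+1)-\alpha-1)+i$ and the upper negation $\binom{q-1-\alpha}{j}=(-1)^j\binom{\alpha+j}{\alpha}$ (valid because $j\le q-1$), one arrives immediately at
\[
S_q(\alpha,a)=(-a)^{\frac{\alpha+1}rq}\sum_{l=0}^{r-1}\sum_{i=0}^\alpha(-1)^i\binom\alpha i\binom{i+\alpha-\frac{\alpha+1}r+\frac lr}\alpha v^{l+ri}.
\]

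The real issue is how to pass from this double sum to the factored form \eqref{2.11}, and here your plan misidentifies both structural features. The factor $\sum_{l=0}^{r-1}v^l$ does \emph{not} come from ``extending $\{0,\dots,r-2\}$ to $\{0,\dots,r-1\}$''; it comes from writing $v^{ri}-1=(v^r-1)\cdot\frac{v^{ri}-1}{v^r-1}$ and then $v^r-1=(v-1)\sum_{l=0}^{r-1}v^l$. Likewise the constant $(-1)^\alpha$ is \emph{not} the $i=0$ contribution (for $i=0$ the factor $(v^{ri}-1)/(v^r-1)$ vanishes anyway). The actual trick is to write $v^{l+ri}=(v^{l+ri}-v^l)+v^l$ and evaluate the second piece via the Vandermonde identity
\[
\sum_{i=0}^\alpha(-1)^i\binom\alpha i\binom{i+\alpha-\frac{\alpha+1}r+\frac lr}\alpha
=\sum_{i}\binom\alpha{\alpha-i}\binom{-\alpha-1}{\frac1r(l(q+1)-\alpha-1)+i}(-1)^{\frac1r(l(q+1)-\alpha-1)}
=(-1)^\alpha,
\]
which holds for each $l$ and produces $(-1)^\alpha\sum_l v^l$. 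The first piece then has the common factor $v^{ri}-1$, whence the factorization. Without this add--subtract step and the accompanying Vandermonde evaluation, your outline does not explain how the clean form \eqref{2.11}--\eqref{2.12} emerges.
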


\begin{proof}
Since $q\ge(r-1)(\alpha+2)$, it is easy to see that $I_\alpha\cap(q+1)\Bbb Z=\{-(r-1)(q+1),\dots,-(q+1),0\}$. Therefore,
\begin{equation}\label{2.13}
\begin{split}
&S_\alpha(\alpha,a)\cr
&=\sum_{-\alpha-1+r(i-j)\in I_\alpha\cap(q+1)\Bbb Z}\binom\alpha i\binom{q-1-\alpha}ja^{-i-jq}\cr
&=\sum_{l=0}^{r-1}\sum_{-\alpha-1+r(i-j)=-l(q+1)}\binom\alpha i\binom{q-1-\alpha}ja^{-i-jq}\cr
&=\sum_{l=0}^{r-1}\sum_{i=0}^\alpha\binom\alpha i\binom{q-1-\alpha}{\frac 1r(l(q+1)-\alpha-1)+i}a^{-i-[\frac 1r(l(q+1)-\alpha-1)+i]q}\cr
&=a^{\frac{\alpha+1}rq}\sum_{l=0}^{r-1}\sum_{i=0}^\alpha\binom\alpha i\binom{-1-\alpha}{\frac 1r(l(q+1)-\alpha-1)+i}a^{-\frac{q+1}rq(l+ri)}\cr
&\kern 5cm (\text{since}\ \frac 1r(l(q+1)-\alpha-1)+i\le q-1)\cr
&=a^{\frac{\alpha+1}rq}\sum_{l=0}^{r-1}\sum_{i=0}^\alpha\binom\alpha i(-1)^{\frac 1r(l(q+1)-\alpha-1)+i}\binom{\alpha+\frac 1r(l(q+1)-\alpha-1)+i}\alpha a^{-\frac{q+1}rq(l+ri)}\cr
&=(-a)^{\frac{\alpha+1}rq}\sum_{l=0}^{r-1}\sum_{i=0}^\alpha(-1)^i\binom\alpha i\binom{i+\alpha-\frac{\alpha+1}r+\frac lr}\alpha v^{l+ri}.
\end{split}
\end{equation}
We also have
\begin{equation}\label{2.14}
\begin{split}
&\sum_{l=0}^{r-1}\sum_{i=0}^\alpha(-1)^i\binom\alpha i\binom{i+\alpha-\frac{\alpha+1}r+\frac lr}\alpha v^l\cr
=\,&\sum_{l=0}^{r-1}v^l\sum_{i=0}^\alpha\binom\alpha{\alpha-i}(-1)^{\frac 1r(l(q+1)-\alpha-1)}\binom{-1-\alpha}{\frac 1r(l(q+1)-\alpha-1)+i}\cr
=\,&\sum_{l=0}^{r-1}v^l(-1)^{\frac 1r(l(q+1)-\alpha-1)}\binom{-1}{\alpha+\frac 1r(l(q+1)-\alpha-1)}\cr
=\,&(-1)^\alpha\sum_{l=0}^{r-1}v^l.
\end{split}
\end{equation}
Combining \eqref{2.13} and \eqref{2.14} gives
\begin{align*}
&S_q(\alpha,a)\cr
&=(-a)^{\frac{\alpha+1}rq}\sum_{l=0}^{r-1}\sum_{i=0}^\alpha(-1)^i\binom\alpha i\binom{i+\alpha-\frac{\alpha+1}r+\frac lr}\alpha (v^{l+ri}-v^l+v^l)\cr
&=(-a)^{\frac{\alpha+1}rq}\Bigl[\sum_{l=0}^{r-1}v^l\sum_{i=0}^\alpha(-1)^i\binom\alpha i\binom{i+\alpha-\frac{\alpha+1}r+\frac lr}\alpha(v^{ri}-1)+(-1)^\alpha\sum_{l=0}^{r-1}v^l\Bigr]\cr
&=(-a)^{\frac{\alpha+1}rq}\Bigl(\sum_{l=0}^{r-1}v^l\Bigr)\Bigl[(v-1)\sum_{l=0}^{r-1}v^l\sum_{i=0}^\alpha(-1)^i\binom\alpha i\binom{i+\alpha-\frac{\alpha+1}r+\frac lr}\alpha\frac{v^{ri}-1}{v^r-1}+(-1)^\alpha\Bigr]\cr
&=(-a)^{\frac{\alpha+1}rq}\Bigl(\sum_{l=0}^{r-1}v^l\Bigr)g_\alpha(v).
\end{align*}
\end{proof}

In \eqref{2.12}, we may treat $g_\alpha(\x)$ as a polynomial over $\Bbb Z[1/r]$. The reason is that for all $i\in\Bbb Z$ and $j\in\Bbb N$, $\binom{i/r}j$ is a $p$-adic integer for all primes $p\nmid r$, and hence $\binom{i/r}j\in\Bbb Z[1/r]$. We have
\[
\begin{split}
g_\alpha(0)\,&=-\sum_{i=1}^\alpha(-1)^i\binom\alpha i\binom{i+\alpha-\frac{\alpha+1}r}\alpha+(-1)^\alpha\cr
&=-\sum_{i=1}^\alpha\binom\alpha i\binom{-\alpha-1}{i-\frac{\alpha+1}r}(-1)^{\frac{\alpha+1}r}+(-1)^\alpha\cr
&=-(-1)^{\frac{\alpha+1}r}\sum_{i=0}^\alpha\binom\alpha{\alpha-i}\binom{-\alpha-1}{i-\frac{\alpha+1}r}+(-1)^\alpha\cr
&=-(-1)^{\frac{\alpha+1}r}\binom{-1}{\alpha-\frac{\alpha+1}r}+(-1)^\alpha=-(-1)^{\frac{\alpha+1}r}(-1)^{\alpha-\frac{\alpha+1}r}+(-1)^\alpha=0.
\end{split}
\]
Hence
\begin{equation}\label{2.15}
h_\alpha(\x):=\x^{-1}g_\alpha(\x)\in(\Bbb Z[1/r])[\x].
\end{equation}

\section{Proof of Theorem~\ref{T1.4}}

Given that $f$ is a PP of $\f_{q^2}$, we show that $\text{gcd}(r,q+1)>1$. Assume to the contrary that $\text{gcd}(r,q+1)=1$. Then there exists a unique $j_0\in\{0,1,\dots,q\}$ such that $-1-rj_0\equiv 0\pmod{q+1}$. Note that $j_0\ne q$ since otherwise $r\equiv 1\pmod{q+1}$, which would imply that $r\ge q+2$, a contradiction. By \eqref{2.3}, 
\[
S_q(0,a)=\sum_{-1-rj\equiv 0\, (\text{mod}\,q+1)}\binom{q-1}ja^{-jq}=\binom{q-1}{j_0}a^{-j_0q}=(-1)^{j_0}a^{-j_0q}\ne 0,
\]
which is a contradiction.

\section{Proof of Theorem~\ref{T1.5}}

Fix $r\ge 3$ and assume that $f=a\x+\x^{r(q-1)+1}$ is a PP of $\f_{q^2}$, where $q\ge r$, $q+1\equiv 0\pmod r$ and $a^{q+1}\ne 1$ ($a\in\f_{q^2}^*$). Our goal is to show that there only finitely many possibilities for $q$. The strategy and the outline of the proof are the following.
\begin{itemize}
  \item [1.] We show that $\text{gcd}_{\Bbb Q[\x]}\{h_\alpha:\alpha>0,\ \alpha\equiv -1\pmod r\}=1$, where $h_\alpha$ is defined in \eqref{2.15}. It follows that there are only finitely many possibilities for $p:=\text{char}\,\f_q$.
  \item [2.] If for a certain characteristic $p$, there are infinitely many possibilities for $q$, then we must have $p\in\{2,3,5\}$.
  \item [3.] For each $p\in\{2,3,5\}$, we prove that there are only finitely many possibilities for $q$.
\end{itemize} 


\subsection{Finiteness of the possibilities of the characteristic}\

Let $\tau>1$ be a prime power with $\tau\equiv -1\pmod r$ and $\text{char}\,\f_\tau=t$. Set $k=(\tau+1)/r$. Note that the polynomial $g_\alpha(\x)$ in \eqref{2.12} can be treated as a polynomial in $\f_t[\x]$. For Lemmas~\ref{L4.1} and \ref{L4.3}, we only require that $r\ge 2$. Lemma~\ref{L4.2} requires that $r\ge m$ and Lemmas~\ref{L4.4} -- \ref{L4.6} require that $r\ge 3$.
  
\begin{lem}\label{L4.1} Assume that $x\ne 0$ is a common root of $g_\tau$, $g_{\tau^3}$ and $g_{\tau^4-2}$ in some extension of $\f_t$. Then, $x\in\f_{\tau^2}^*$, $x^r\in\f_\tau^*\setminus\{1\}$, and 
\begin{equation}\label{4.1}
x^r=\frac{x(rx+1-r)}{(r+1)x-r},
\end{equation}
where $(r+1)x-r\ne 0$.
\end{lem}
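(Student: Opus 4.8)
The plan is to exploit the structure of $g_\alpha$ from \eqref{2.12} together with the hypothesis that $x\ne0$ annihilates $g_\tau$, $g_{\tau^3}$ and $g_{\tau^4-2}$. The first observation is that when $\alpha=\tau$ is a power of $t$ (or more generally $\alpha\equiv-1\pmod r$ with $k=(\alpha+1)/r$), the binomial coefficients $\binom{i+\alpha-k+\frac lr}{\alpha}$ appearing in $g_\alpha$ simplify dramatically modulo $t$: since $\alpha+1\equiv0\pmod t$ when $\tau$ is a $t$-power, Lucas' theorem (or the $p$-adic expansion of $\binom{\,\cdot\,}{\alpha}$) collapses most terms, leaving only a few surviving summands in the double sum defining $g_\tau(\x)$. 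Concretely, I expect that over $\f_t$ the polynomial $g_\tau$ reduces to something like a low-degree rational expression in $\x$ and $\x^r$ — after clearing the denominator $\x^r-1$ — so that the condition $g_\tau(x)=0$ becomes a polynomial identity of small degree relating $x$ and $x^r$. Doing the same for $\alpha=\tau^3$ and $\alpha=\tau^4-2$ (both $\equiv-1\pmod r$ since $\tau\equiv-1$) produces two further such relations.

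The second step is to combine these three relations. From $g_\tau(x)=0$ I expect to extract that $x^r$ is a specific linear-fractional function of $x$ — this is exactly the shape of \eqref{4.1}, $x^r=\frac{x(rx+1-r)}{(r+1)x-r}$ — with the side condition $(r+1)x-r\ne0$ coming from the need for the denominator obtained in clearing $\x^r-1$ not to vanish (and one checks separately that $(r+1)x-r=0$ forces $x$ into a degenerate case excluded by $x\ne0$ or by one of the other two equations). Then $g_{\tau^3}(x)=0$ and $g_{\tau^4-2}(x)=0$, once \eqref{4.1} is substituted, should force $x$ to satisfy relations that pin down its field of definition: raising \eqref{4.1} to Frobenius powers and using $x^{\tau^2}$-type identities, one gets $x\in\f_{\tau^2}^*$. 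From $x\in\f_{\tau^2}$ and \eqref{4.1} one then reads off $x^r\in\f_\tau^*$ (apply the norm/Frobenius $y\mapsto y^\tau$ to both sides of \eqref{4.1} and use that the right side is a rational function with $\f_\tau$-coefficients evaluated at $x\in\f_{\tau^2}$, together with $\tau\equiv-1\pmod r$ to control how $x^r$ transforms), and finally $x^r\ne1$ because $x^r=1$ substituted into \eqref{4.1} gives $x=\frac{x(rx+1-r)}{(r+1)x-r}$, i.e. $(r+1)x-r=rx+1-r$, i.e. $x=1$, which would make $g_\tau(1)$ the relevant value — and one checks $g_\tau(1)\ne0$ (indeed $g_\alpha(1)$ can be computed directly from \eqref{2.12} using $\frac{\x^{ri}-1}{\x^r-1}\big|_{\x=1}=i$, and it should come out nonzero, or else $x=1$ is excluded because it fails $g_{\tau^3}$ or $g_{\tau^4-2}$).

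I expect the main obstacle to be the first step: carrying out the mod-$t$ simplification of $g_\tau$, $g_{\tau^3}$, $g_{\tau^4-2}$ cleanly. The binomial coefficients $\binom{i+\alpha-k+l/r}{\alpha}$ are not integers — they live in $\Bbb Z[1/r]$ — so one cannot naively apply Lucas; instead one must work $t$-adically, writing each coefficient via its $t$-adic digits and using that $t\nmid r$ so that division by $r$ is invertible $t$-adically, then reduce mod $t$. Keeping track of which of the $O(\alpha)$ terms survive, and doing this uniformly for the three exponents $\tau,\tau^3,\tau^4-2$ (where $\tau-1$, $\tau^3-1$, $\tau^4-3$ are the relevant "top" parameters), is the delicate bookkeeping. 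A secondary subtlety is handling the possibility $\sum_{l=0}^{r-1}v^l=0$ or the vanishing of $\x^r-1$: but here we are told $x\ne0$ is a common root of the $g_\alpha$ themselves (not of $S_q(\alpha,a)$), so the factor $\sum_{l=0}^{r-1}v^l$ is irrelevant and one only needs to be careful when clearing $\x^r-1$, which is precisely where the condition $x^r\ne1$ — to be proved — and the auxiliary exclusion $(r+1)x-r\ne0$ enter. Once the three polynomial relations are in hand, eliminating variables to reach \eqref{4.1} and the field-membership statements is linear algebra over $\f_t(x)$ and should be routine.
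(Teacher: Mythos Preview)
Your outline has the right ingredients (the $t$-adic reduction of the binomial coefficients in $g_\alpha$) but the logical flow is inverted, and the step you flag as ``routine'' is precisely where the argument lives. The equation $g_\tau(x)=0$ does \emph{not} by itself give a low-degree relation between $x$ and $x^r$: over $\f_t$ one finds
\[
-g_\tau(x)=(x^r-1)^{\tau-1}\Bigl[\Bigl(1+\frac{kx}{x-1}\Bigr)(x^r-1)-x\Bigr]+1,
\]
which has degree roughly $r\tau$ because of the factor $(x^r-1)^{\tau-1}$. The linear-fractional relation \eqref{4.1} cannot be read off from this alone. What actually happens is a bootstrap: the computation above holds verbatim for $\alpha=\tau^3$ with the \emph{same} bracket, so $g_\tau(x)=g_{\tau^3}(x)=0$ forces $(x^r-1)^{\tau-1}=(x^r-1)^{\tau^3-1}$, hence $x^r\in\f_{\tau^2}$ (and $x^r\ne1$ directly from the displayed formula). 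Since the displayed equation is then quadratic in $x$ over $\f_{\tau^2}$, one gets $x\in\f_{\tau^4}$.

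Only now does $g_{\tau^4-2}$ collapse: the simplification of $g_{\tau^4-2}(x)$ uses $x^{\tau^4-1}=1$ to kill the high-degree terms (otherwise it too has large degree), and what remains is a genuinely low-degree identity equivalent to \eqref{4.1}. Comparing this with the $g_\tau$ equation then gives $(x^r-1)^{\tau-1}=1$, i.e.\ $x^r\in\f_\tau$, and feeding that back into the quadratic yields $x\in\f_{\tau^2}$. So the order is: $g_\tau$ and $g_{\tau^3}$ together $\Rightarrow$ $x\in\f_{\tau^4}$ $\Rightarrow$ $g_{\tau^4-2}$ collapses to \eqref{4.1} $\Rightarrow$ combine back with $g_\tau$ for $x^r\in\f_\tau$ and $x\in\f_{\tau^2}$. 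Your plan to extract \eqref{4.1} first from $g_\tau$ and then deduce field membership by ``raising to Frobenius powers'' does not work as stated, and your proposed check that $x^r\ne1$ via \eqref{4.1} is circular since \eqref{4.1} is downstream of $x^r\ne1$.
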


\begin{proof}
All polynomials considered in this proof are in characteristic $t$.

$1^\circ$ Let $\alpha=\tau^{2m+1}$, where $m\ge 0$ is an integer. Note that $\alpha\equiv -1\pmod r$ and that \eqref{2.12} gives
\begin{equation}\label{4.2}
-g_\alpha=(\x-1)\frac{x^{\alpha r}-1}{\x^r-1}\sum_{l=0}^{r-1}\binom{2\alpha-\frac{\alpha+1}r+\frac lr}\alpha x^l+1.
\end{equation}
Let $k'=(\alpha+1)/r$. In the ring $\Bbb Z_t$ of $t$-adic integers, 
\[
\frac 1r=\frac{k'}{1+\alpha}\equiv k'(1-\alpha)\pmod{\alpha^2}.
\]
Note that $k'\equiv\frac 1r\equiv k\pmod t$. For $l\in\Bbb Z$,
\[
\begin{split}
2\alpha-\frac{\alpha+1}r+\frac lr\,&\equiv 2\alpha-k'+lk'(1-\alpha)\pmod{\alpha^2}\cr
&=k'(l-1)+\alpha(2-k'l).
\end{split}
\]
When $1\le l\le r-1$, we have
\[
\begin{split}
\binom{2\alpha-\frac{\alpha+1}r+\frac lr}\alpha\,&=\binom{k'(l-1)+\alpha(2-k'l)}\alpha\cr
&=\binom{2-k'l}1\kern 3cm \text{(since $0\le k'(l-1)<\alpha$)}\cr
&=2-k'l=2-kl.
\end{split}
\]
When $l=0$,
\[
\binom{2\alpha-\frac{\alpha+1}r}\alpha=\binom{2\alpha-k'}\alpha=\binom{\alpha-k'+\alpha}\alpha=1.
\]
Therefore,
\begin{equation}\label{4.3}
\begin{split}
&-g_\alpha\cr
&=(\x-1)\frac{\x^{\alpha r}-1}{\x^r-1}\Bigl[1+\sum_{l=1}^{r-1}(2-kl)\x^l\Bigr]+1\cr
&=(\x^r-1)^{\alpha-1}(\x-1)\Bigl[-1+\sum_{l=0}^{r-1}(2-kl)\x^l\Bigr]+1\cr
&=(\x^r-1)^{\alpha-1}(\x-1)\Bigl[-1+2\frac{\x^r-1}{\x-1}-k\x\bigl(r\x^{r-1}(\x-1)^{-1}-(\x^r-1)(\x-1)^{-2}\bigr)\Bigr]+1\cr
&=(\x^r-1)^{\alpha-1}\Bigl[\Bigl(1+\frac{k\x}{\x-1}\Bigr)(\x^r-1)-\x\Bigr]+1.
\end{split}
\end{equation}
(In the last step, we used the relation $kr\equiv 1\pmod t$.) Since $g_\tau(x)=0=g_{\tau^3}(x)$, it follows from \eqref{4.3} that 
\[
(x^r-1)^{\tau-1}=(x^r-1)^{\tau^3-1}.
\]
Therefore $(x^r-1)^\tau=(x^r-1)^{\tau^3}$, which gives $x^{r\tau^2}=x^r$; that is, $x^r\in\f_{\tau^2}$. Clearly, $x^r\ne 1$. (Otherwise, by \eqref{4.3}, we have $-g_\tau(x)=1\ne 0$, which is a contradiction.) Using \eqref{4.3}, the condition $g_\tau(x)=0$ becomes
\begin{equation}\label{4.4}
-(x^r-1)^{\tau-1}x(x-1)+(x^r-1)^\tau\bigl[(1+k)x-1\bigr]+x-1=0.
\end{equation}
Treating \eqref{4.4} as a quadratic equation in $x$ with coefficients in $\f_{\tau^2}$, we conclude that $x\in\f_{\tau^4}$.

\medskip
$2^\circ$ Let $\beta=\tau^4-2$. (Note that $\beta\equiv -1\pmod r$.) Let $k''=(\beta+1)/r$. In $\Bbb Z_t$,
\[
\frac 1r=\frac{k''}{\tau^4-1}\equiv -k''\pmod{\tau^4},
\]
and $k''\equiv-\frac 1r\equiv -k\pmod t$. For $1\le i\le \alpha$ and $0\le l\le r-1$, we have 
\[
i+\beta-\frac{\beta+1}r+\frac lr\equiv i-2-k''-lk''\pmod{\tau^4}.
\]
Hence
\begin{equation}\label{4.5}
\begin{split}
&\binom{i+\beta-\frac{\beta+1}r+\frac lr}\beta=\binom{i-2-k''(1+l)}\beta\cr
=\,&
\begin{cases}
-1&\text{if}\ i-2-k''(1+l)\equiv -1\pmod{\tau^4},\cr
1&\text{if}\ i-2-k''(1+l)\equiv -2\pmod{\tau^4},\cr
0&\text{otherwise}.
\end{cases}
\end{split}
\end{equation}
(In the last step, we used the fact that the base $t$ digits of $\beta$ are $(t-2,t-1,\dots,t-1)$.) Since
\[
i-2-k''(1+l)\le \tau^4-4-k''<\tau^4-4
\]
and 
\[
i-2-k''(1+l)\ge -1-k''r=-1-(\beta+1)=-\tau^4,
\]
we see that for $c=-1$ and $-2$, $i-2-k''(1+l)\equiv c\pmod{\tau^4}$ if and only if $i-2-k''(1+l)=c$. Therefore we can rewrite \eqref{4.5} as
\begin{equation}\label{4.6}
\binom{i+\beta-\frac{\beta+1}r+\frac lr}\beta=
\begin{cases}
-1&\text{if}\ i-2-k''(1+l)= -1,\cr
1&\text{if}\ i-2-k''(1+l)=-2,\cr
0&\text{otherwise}.
\end{cases}
\end{equation}
By \eqref{2.12} and \eqref{4.6}, we have
\begin{equation}\label{4.7}
\begin{split}
0\,&=-g_\beta(x)\cr
&=-(x-1)\sum_{l=0}^{r-1}x^l\Bigl[(-1)^{k''(1+l)}\binom\beta{k''(1+l)+1}\frac{x^{r(k''(1+l)+1)}-1}{x^r-1}\cr
&\kern 3cm +(-1)^{k''(1+l)}\binom\beta{k''(1+l)}\frac{x^{rk''(1+l)}-1}{x^r-1}\Bigr]+1\cr
&=-(x-1)\sum_{l=0}^{r-1}x^l(-1)^{k''(1+l)}\binom{\tau^4-2}{k''(1+l)+1}+1.
\end{split}
\end{equation}
(For the last step, note that $x^{rk''}=x^{\beta+1}=x^{\tau^4-1}=1$ since $x\in\f_{\tau^4}^*$.)
Note that 
\[
k''(1+l)+1
\begin{cases}
<\tau^4&\text{if}\ 0\le l<r-1,\cr
=\tau^4&\text{if}\ l=r-1.
\end{cases}
\]
Hence
\[
\begin{split}
&\binom{\tau^4-2}{k''(1+l)+1}\cr
=\,&\begin{cases}
\displaystyle\binom{-2}{k''(1+l)+1}=(-1)^{k''(1+l)+1}\bigl(k''(1+l)+2\bigr)&\text{if}\ 0\le l<r-1,\vspace{2mm}\cr
0&\text{if}\ l=r-1.
\end{cases}
\end{split}
\]
Thus \eqref{4.7} becomes
\begin{equation}\label{4.8}
\begin{split}
0\,&=(x-1)\sum_{l=0}^{r-2}\bigl(k''(1+l)+2\bigr)x^l+1\cr
&=(x-1)\Bigl[-x^{r-1}+\sum_{l=0}^{r-1}\bigl(2-k(1+l)\bigr)x^l\Bigr]+1\kern 1cm \text{(since $k''\equiv -k\pmod t$)}\cr
&=(x-1)\Bigl[-x^{r-1}+(2-k)\frac{x^r-1}{x-1}-kx\bigl(rx^{r-1}(x-1)^{-1}-(x^r-1)(x-1)^{-2}\bigr)\Bigr]+1\cr
&=-x^r+x^{r-1}+(2-k)(x^r-1)-x^r+\frac{kx(x^r-1)}{x-1}+1\cr
&=\frac k{x-1}(x^r-1)+x^{r-1}-1.
\end{split}
\end{equation}
Multiplying \eqref{4.8} by $x$ gives 
\begin{equation}\label{4.9}
0=\Bigl(1+\frac{kx}{x-1}\Bigr)(x^r-1)-x+1.
\end{equation}
On the other hand, by \eqref{4.3}, 
\begin{equation}\label{4.10}
0=-g_\tau(x)=(x^r-1)^{\tau-1}\Bigl[\Bigl(1+\frac{kx}{x-1}\Bigr)(x^r-1)-x\Bigr]+1.
\end{equation}
Combining \eqref{4.9} and \eqref{4.10} gives $(x^r-1)^{\tau-1}=1$, which implies that $x^r\in\f_\tau$. It follows from \eqref{4.4} that $x\in\f_{\tau^2}$. 

Finally, by \eqref{4.9}, we have
\[
x^r\bigl[(r+1)x-r\bigr]=x(rx+1-r).
\] 
If $(r+1)x-r=0$, then $rx+1-r=0$; the two equations imply that $x=1$, which is a contradiction. Hence we have $(r+1)x-r\ne 0$ and 
\eqref{4.1} follows. 
\end{proof}

\begin{lem}\label{L4.2}
Let $x\in\overline\f_t$ (the algebraic closure of $\f_t$) be such that $x^r\in\f_\tau^*\setminus\{1\}$. Let $m$ be a positive integer, and assume that $r\ge m$ and $k:=(\tau+1)/r>2(m-1)$. Then
\begin{equation}\label{4.11}
\begin{split}
g_{m-1+m\tau}(x)
=\,&(x-1)\sum_{\substack{0\le i_0\le m-1\cr 0\le i_1\le m}}(-1)^{i_0+i_1}\binom{m-1}{i_0}\binom m{i_1}\frac{x^{r(i_0+i_1)}-1}{x^r-1}\cr
&\cdot\Bigl[\sum_{l=0}^{r-1}\binom{i_0+m-1+(l-m)k}{m-1}\binom{i_1+m-lk}mx^l\cr
&-\sum_{l=0}^{m-1}\binom{i_0+m-1+(l-m)k}{m-1}\binom{i_1+m-lk}{m-1}x^l\Bigr]-1.
\end{split}
\end{equation}
\end{lem}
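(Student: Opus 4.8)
The plan is to substitute $\alpha=m-1+m\tau$ into \eqref{2.12} and to evaluate the two families of binomial coefficients occurring there by reducing their arguments modulo powers of $\tau$ and invoking Lucas' theorem for the prime $t$ (write $\tau=t^e$). First note that $\alpha+1=m(\tau+1)=mrk$, so $\alpha\equiv-1\pmod r$ and $(\alpha+1)/r=mk$; hence \eqref{2.12} applies. The hypotheses easily give $m<\tau$ (since $2m(m-1)<mk\le rk=\tau+1$ when $m\ge2$), so the base-$t$ digits of $\alpha=(m-1)+mt^e$ all lie in positions $<2e$. For $1\le i\le\alpha$ write $i=i_0+i_1\tau$ with $0\le i_0<\tau$; since $i<(m+1)\tau$ one has $0\le i_1\le m$, and Lucas' theorem gives $\binom\alpha i\equiv\binom{m-1}{i_0}\binom m{i_1}\pmod t$, which is $0$ unless also $0\le i_0\le m-1$. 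Moreover $(-1)^i\equiv(-1)^{i_0+i_1}\pmod t$, and $x^{ri}=x^{r(i_0+i_1)}$ since $(x^r)^\tau=x^r$. Thus the $i$-sum in \eqref{2.12} collapses to a double sum over the box $0\le i_0\le m-1$, $0\le i_1\le m$, carrying the prefactor $(-1)^{i_0+i_1}\binom{m-1}{i_0}\binom m{i_1}\frac{x^{r(i_0+i_1)}-1}{x^r-1}$ that appears in \eqref{4.11}.

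The heart of the proof is the evaluation of $\binom{i+\alpha-\frac{\alpha+1}r+\frac lr}\alpha\bmod t$. In the ring $\Bbb Z_t$ of $t$-adic integers one has $\frac1r=\frac k{\tau+1}\equiv k(1-\tau)\pmod{\tau^2}$, so the argument is $\equiv A_0+B_0\tau\pmod{\tau^2}$, where $A_0=i_0+m-1+(l-m)k$ and $B_0=i_1+m-lk$; since the digits of $\alpha$ sit in positions $<2e$, this congruence modulo $\tau^2$ determines $\binom{\,\cdot\,}\alpha\bmod t$. Using $k>2(m-1)$ together with $m\le r$ one checks that $A_0\ge0$ exactly when $l\ge m$, with moreover $0\le A_0<\tau$ in that range (this uses $k\ge2m-1$), and $-\tau\le A_0<0$ when $0\le l\le m-1$. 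Consequently Lucas' theorem yields $\binom{\,\cdot\,}\alpha\equiv\binom{A_0}{m-1}\binom{B_0}m\pmod t$ for $l\ge m$, and, after rewriting the argument as $(A_0+\tau)+(B_0-1)\tau$, $\binom{\,\cdot\,}\alpha\equiv\binom{A_0+\tau}{m-1}\binom{B_0-1}m\equiv\binom{A_0}{m-1}\binom{B_0-1}m\pmod t$ for $l<m$; the last congruence uses $\binom{a+\tau}{n}\equiv\binom an\pmod t$ for every $a\in\Bbb Z$ and $n<\tau$ (from $(1+y)^{a+\tau}\equiv(1+y)^a(1+y^\tau)\pmod t$, reading off the coefficient of $y^n$). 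A few boundary configurations (e.g.\ $A_0=-\tau$, which makes $\binom{A_0+\tau}{m-1}=\binom0{m-1}=0$, and $k=1$, which forces $m=1$) have to be noted to contribute $0$ and cause no harm.

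Finally I would reassemble \eqref{2.12}: the terms with $l\ge m$ already appear as part of the first sum inside the bracket of \eqref{4.11}, while for $l<m$ Pascal's rule $\binom{B_0-1}m=\binom{B_0}m-\binom{B_0-1}{m-1}$ splits the term into a contribution to the first sum plus the subtracted sum of \eqref{4.11}. Together with $(-1)^\alpha=-1$ in $\f_t$ (note $\alpha$ is odd when $t$ is odd, while $-1=1$ when $t=2$) this yields \eqref{4.11}.

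The step I expect to be the main obstacle is the carry analysis in the middle paragraph: establishing the inequalities for $A_0$ (and keeping $B_0$ under control) uniformly over all $(i_0,i_1,l)$ in the relevant box, and verifying that exactly one borrow occurs when $l<m$, so that the clean two-case description of $\binom{\,\cdot\,}\alpha$ holds without exception. The remaining ingredients are standard $t$-adic/Lucas facts and Pascal-type manipulations.
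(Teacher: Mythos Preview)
Your proposal is correct and follows essentially the same route as the paper: both use Lucas to collapse $\binom\alpha i$ to $\binom{m-1}{i_0}\binom m{i_1}$ (together with $x^{r\tau}=x^r$ to reduce the exponent of $x$), reduce the remaining argument modulo $\tau^2$ to $A_0+B_0\tau$, establish the inequalities \eqref{4.14}--\eqref{4.15} on $A_0$ to separate the cases $l<m$ and $l\ge m$, and finish with Pascal's rule $\binom{B_0-1}m=\binom{B_0}m-\binom{B_0-1}{m-1}$. Note that this argument (yours and the paper's) actually lands on $\binom{i_1+m-1-lk}{m-1}$ in the subtracted sum, matching the paper's proof; the $\binom{i_1+m-lk}{m-1}$ appearing in the displayed statement \eqref{4.11} is a typo.
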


\begin{proof}
Let $\alpha=m-1+m\tau$. For any integer $i\ge 0$, we have $\binom\alpha i=0$ unless $i=i_0+i_1\tau$, where $0\le i_0\le m-1$ and $0\le i_1\le m$, in which case,
\[
\binom\alpha i=\binom{m-1}{i_0}\binom m{i_1}.
\]
Thus by \eqref{2.12},
\begin{equation}\label{4.12}
\begin{split}
g_\alpha(x)
=\,&(x-1)\sum_{l=0}^{r-1}x^l\sum_{\substack{0\le i_0\le m-1\cr 0\le i_1\le m}}(-1)^{i_0+i_1}\binom{m-1}{i_0}\binom m{i_1}\cr
&\cdot\binom{i_0+i_1\tau+\alpha-\frac{\alpha+1}r+\frac lr}\alpha\frac{x^{r(i_0+i_1)}-1}{x^r-1}-1.
\end{split}
\end{equation}
We have 
\begin{equation}\label{4.13}
\begin{split}
&i_0+i_1\tau+\alpha-\frac{\alpha+1}r+\frac lr\cr
\equiv\,&i_0+i_1\tau+m-1+m\tau-mk+lk(1-\tau)\pmod{\tau^2}\cr
=\,&\bigl(i_0+m-1+(l-m)k\bigr)+(i_1+m-lk)\tau.
\end{split}
\end{equation}
When $0\le l\le m-1$,
\begin{equation}\label{4.14}
\begin{cases}
i_0+m-1+(l-m)k+\tau\le 2m-2-k+\tau<\tau,\vspace{2mm}\cr
i_0+m-1+(l-m)k+\tau\ge m-1-mk+\tau=\tau+m-1-\displaystyle\frac mr(\tau+1)\ge 0.
\end{cases}
\end{equation}
When $m\le l\le r-1$,
\begin{equation}\label{4.15}
0\le i_0+m-1+(l-m)k\le 2m-2+(r-1-m)k=2m-2-(m+1)k+\tau+1<\tau.
\end{equation}
(To see the last inequality, consider the cases $m=1$ and $m>1$ separately; note that when $m>1$, $k>2(m-1)\ge 2$.) Therefore,
\begin{equation}\label{4.16}
\begin{split}
&\binom{i_0+i_1\tau+\alpha-\frac{\alpha+1}r+\frac lr}\alpha \vrule height 0pt width 0pt depth 4mm\cr
=\,&\left(\!\!
\begin{array}{ccc}
(i_0+m-1+(l-m)k)&+&(i_1+m-lk)\tau\\
m-1&+&m\tau
\end{array}\!\!\right)\cr
&\kern 2.1cm \text{(by \eqref{4.13} and the fact that $m<\tau$, which is easy to see)}\vrule height 0pt width 0pt depth 4mm \cr
=\,&
\begin{cases}
\displaystyle \binom{i_0+m-1+(l-m)k}{m-1}\binom{i_1+m-1-lk}m&\text{if}\ 0\le l\le m-1,\vspace{3mm}\cr
\displaystyle \binom{i_0+m-1+(l-m)k}{m-1}\binom{i_1+m-lk}m&\text{if}\ m\le l\le r-1,
\end{cases}\cr
&\kern 7.5cm \text{(by \eqref{4.14} and \eqref{4.15})}.
\end{split}
\end{equation}
Combining \eqref{4.12} and \eqref{4.16} gives
\[
\begin{split}
g_\alpha(x)=\,&(x-1)\sum_{\substack{0\le i_0\le m-1\cr 0\le i_1\le m}}(-1)^{i_0+i_1}\binom{m-1}{i_0}\binom m{i_1}\frac{x^{r(i_0+i_1)}-1}{x^r-1}\cr
&\cdot\Bigl[\sum_{l=0}^{m-1}\binom{i_0+m-1+(l-m)k}{m-1}\binom{i_1+m-1-lk}m x^l\cr
&+\sum_{l=m}^{r-1}\binom{i_0+m-1+(l-m)k}{m-1}\binom{i_1+m-lk}m x^l\Bigr]-1\cr
=\,&(x-1)\sum_{\substack{0\le i_0\le m-1\cr 0\le i_1\le m}}(-1)^{i_0+i_1}\binom{m-1}{i_0}\binom m{i_1}\frac{x^{r(i_0+i_1)}-1}{x^r-1}\cr
&\cdot\Bigl[\sum_{l=0}^{r-1}\binom{i_0+m-1+(l-m)k}{m-1}\binom{i_1+m-lk}m x^l\cr
&-\sum_{l=0}^{m-1}\binom{i_0+m-1+(l-m)k}{m-1}\binom{i_1+m-1-lk}{m-1} x^l\Bigr]-1.
\end{split}
\]
\end{proof}

\begin{lem}\label{L4.3} Let $x\in\overline\f_t$ be such that $x^r\in\f_\tau^*\setminus\{1\}$. Assume that $k:=(\tau+1)/r>2$. Then 
\begin{equation}\label{4.17}
\begin{split}
g_{1+\tau+\tau^3}(x)=\,&(x-1)\sum_{0\le i_0,i_1,i_3\le 1}(-1)^{i_0+i_1+i_3}\frac{x^{r(i_0+i_1+i_3)}-1}{x^r-1}\cr
&\cdot\Bigl[\sum_{l=0}^{r-1}\bigl(i_0+1+(l-2)k\bigr)\bigl(i_1+1-(l-1)k\bigr)(i_3+1-lk)x^l\cr
&-(i_0+1-2k)(i_1+i_3+k+1)-(i_0+1-k)(i_3+1-k)x\Bigr]-1.
\end{split}
\end{equation}
\end{lem}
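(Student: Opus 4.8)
The plan is to run the same machinery used for Lemmas~\ref{L4.1} and \ref{L4.2}, now with the index $\alpha=1+\tau+\tau^3$, and to extract \eqref{4.17} from the closed form \eqref{2.12}. Concretely: (i) determine which $i$ contribute to the inner sum in \eqref{2.12}; (ii) for each such $i$ and each $l$, compute $\binom{i+\alpha-\frac{\alpha+1}r+\frac lr}\alpha$ modulo $t$ by a base-$\tau$ digit analysis; (iii) simplify the surviving exponents of $x$ using $x^r\in\f_\tau^*$; and (iv) assemble the pieces.

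For (i), write $\tau=t^e$ with $e\ge1$ (note $t\nmid r$ since $\tau\equiv-1\pmod r$). In base $t$ the nonzero digits of $\alpha=1+t^e+t^{3e}$ all equal $1$ and sit at positions $0,e,3e$, so Lucas' theorem gives $\binom\alpha i=0$ in $\f_t$ unless $i=i_0+i_1\tau+i_3\tau^3$ with $i_0,i_1,i_3\in\{0,1\}$, in which case $\binom\alpha i=1$ and $(-1)^i\equiv(-1)^{i_0+i_1+i_3}$ in $\f_t$. For (iii), since $x^r\in\f_\tau^*$ we have $x^{r\tau}=x^r$, hence $x^{ri}=x^{r(i_0+i_1+i_3)}$, which is exactly the exponent in \eqref{4.17}; also $(-1)^\alpha=-1$ in $\f_t$.

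Step (ii) is the core. Since $rk=\tau+1$ with $k=(\tau+1)/r$ a positive integer, the $t$-adic geometric series gives $\frac1r=\frac k{\tau+1}\equiv k(1-\tau+\tau^2-\tau^3)\pmod{\tau^4}$, and a direct expansion of $N:=i+\alpha-\frac{\alpha+1}r+\frac lr$ for $i=i_0+i_1\tau+i_3\tau^3$ yields
\[
N\equiv\bigl(i_0+1+(l-2)k\bigr)+\bigl(i_1+1-(l-1)k\bigr)\tau+(l-1)k\,\tau^2+\bigl(i_3+1-lk\bigr)\tau^3\pmod{\tau^4}.
\]
To read off $\binom N\alpha$ I would rewrite the right side as $d_0+d_1\tau+d_2\tau^2+d_3\tau^3$ with each $d_j\in[0,\tau)$; since $\tau\equiv0\pmod t$ and $\alpha$'s digits lie at positions $0,e,3e$, one then has $\binom N\alpha\equiv d_0d_1d_3\pmod t$. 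This carry bookkeeping is the hard part, and it is where the hypothesis $k>2$ is used. With $k\ge3$ (hence $\tau=rk-1\ge8$) one checks case by case: for $2\le l\le r-1$ the coefficients at positions $\tau^1,\tau^3$ are negative but $>-\tau$, so a single borrow into each slot gives $\binom N\alpha\equiv(i_0+1+(l-2)k)(i_1+1-(l-1)k)(i_3+1-lk)$; for $l=0$ the coefficients at $\tau^0,\tau^2$ are negative, and borrowing into each gives $\binom N\alpha\equiv(i_0+1-2k)\,i_3\,(i_1+k)$; for $l=1$ the coefficients at $\tau^0,\tau^3$ are negative, and the corresponding borrows give $\binom N\alpha\equiv(i_0+1-k)\,i_1\,(i_3+1-k)$ (all congruences mod $t$).

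Finally, for (iv) I would substitute everything into \eqref{2.12}. Extending the sum over $(i_0,i_1,i_3)$ to all of $\{0,1\}^3$ is harmless because the $(0,0,0)$ term carries the factor $\frac{x^{r(i_0+i_1+i_3)}-1}{x^r-1}=0$, and it remains to write the inner $l$-sum as the generic sum $\sum_{l=0}^{r-1}(i_0+1+(l-2)k)(i_1+1-(l-1)k)(i_3+1-lk)x^l$ minus the two corrections arising at $l=0$ and $l=1$. A short expansion identifies those corrections as $(i_0+1-2k)(i_1+i_3+k+1)$ and $(i_0+1-k)(i_3+1-k)x$, which is exactly \eqref{4.17}. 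The only genuine difficulty in the whole argument is the carry analysis above; the remainder is the same routine manipulation of binomials of $t$-adic integers as in the proof of Lemma~\ref{L4.2}.
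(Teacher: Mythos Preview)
Your proposal is correct and follows essentially the same approach as the paper's own proof: both identify the contributing indices $i=i_0+i_1\tau+i_3\tau^3$ via Lucas, compute the expansion of $i+\alpha-\frac{\alpha+1}r+\frac lr$ modulo $\tau^4$, carry out a case analysis on $l\in\{0,1\}$ versus $2\le l\le r-1$ to evaluate the binomial coefficient, and then repackage the $l=0,1$ exceptions as subtractive corrections to the uniform cubic-in-$l$ sum. The only cosmetic difference is that the paper keeps the $\tau$- and $\tau^2$-level information bundled as $(i_1+1+k(l-1)(\tau-1))\tau$, whereas you split this into separate $\tau^1$ and $\tau^2$ coefficients and speak in terms of explicit borrows; the resulting digit values and final formula agree.
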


\begin{proof}
Let $\alpha=1+\tau+\tau^3$. For any integer $i\ge 0$, we have $\binom\alpha i=0$ unless $i=i_0+i_1\tau+i_3\tau^3$, where $0\le i_0,i_1,i_3\le 1$, in which case,
\[
\binom\alpha i=1.
\]
Thus \eqref{2.12} gives
\begin{equation}\label{4.18}
\begin{split}
&g_\alpha(x)=\cr
&(x-1)\sum_{l=0}^{r-1}x^l\sum_{0\le i_0,i_1,i_3\le 1}(-1)^{i_0+i_1+i_3}\binom{i_0+i_1\tau+i_3\tau^3+\alpha-\frac{\alpha+1}r+\frac lr}\alpha\frac{x^{r(i_0+i_1+i_3)}-1}{x^r-1}-1.
\end{split}
\end{equation}
We have
\[
\begin{split}
&i_0+i_1\tau+i_3\tau^3+\alpha-\frac{\alpha+1}r+\frac lr\cr
\equiv\,&i_0+i_1\tau+i_3\tau^3+1+\tau+\tau^3-k(2-\tau+\tau^2)+lk(1-\tau+\tau^2-\tau^3)\pmod{\tau^4}\cr
=\,&\bigl(i_0+1+(l-2)k\bigr)+\bigl(i_1+1+k(l-1)(\tau-1)\bigr)\tau+(i_3+1-lk)\tau^3.
\end{split}
\]
When $0\le l\le 1$,
\[
i_0+1+(l-2)k+\tau\le 2-k+\tau<\tau,
\]
and
\[
i_0+1+(l-2)k+\tau\ge 1-2k+\tau=\tau+1-\frac 2r(\tau+1)\ge 0.
\]
So 
\begin{equation}\label{4.19}
\begin{split}
&\binom{i_0+i_1\tau+i_3\tau^3+\alpha-\frac{\alpha+1}r+\frac lr}\alpha\cr
=\,&\bigl(i_0+1+(l-2)k\bigr)\left(\!\!
\begin{array}{ccc}
\bigl(i_1+k(l-1)(\tau-1)\tau&+&(i_3+1-lk)\tau^3\cr
\tau&+&\tau^3
\end{array}\!\!\right).
\end{split}
\end{equation}
When $l=0$,
\[
\begin{cases}
i_1+k(l-1)(\tau-1)+\tau^2\ge-k(\tau-1)+\tau^2=\displaystyle -\frac 1r(\tau+1)(\tau-1)+\tau^2\ge 0,\vspace{1mm}\cr
i_1+k(l-1)(\tau-1)+\tau^2\le 1-k(\tau-1)+\tau^2<\tau^2.
\end{cases}
\]
When $l=1$,
\[
\begin{cases}
i_1+k(l-1)(\tau-1)\ge 0,\cr
i_1+k(l-1)(\tau-1)\le 1<\tau^2.
\end{cases}
\]
Hence
\begin{equation}\label{4.20}
\left(\!\!
\begin{array}{ccc}
\bigl(i_1+k(l-1)(\tau-1)\tau&+&(i_3+1-lk)\tau^3\cr
\tau&+&\tau^3
\end{array}\!\!\right)=
\begin{cases}
(i_1+k)i_3&\text{if}\ l=0,\vspace{2mm}\cr
i_1(i_3+1-k)&\text{if}\ l=1.
\end{cases}
\end{equation}
When $2\le l\le r-1$,
\[
\begin{cases}
i_0+1+(l-2)k\ge 0,\cr
i_0+1+(l-2)k\le 2+(r-3)k=2-3k+\tau+1<\tau,
\end{cases}
\]
and
\[
\begin{cases}
i_1+1+k(l-1)(\tau-1)&\kern-2mm\ge 0,\vspace{2mm}\cr
i_1+1+k(l-1)(\tau-1)&\kern-2mm\le 2+k(r-3)(\tau-1)\le 2-3+kr(\tau-1)\cr
&\kern-2mm<(\tau+1)(\tau-1)<\tau^2.
\end{cases}
\]
Hence
\begin{equation}\label{4.21}
\binom{i_0+i_1\tau+i_3\tau^3+\alpha-\frac{\alpha+1}r+\frac lr}\alpha=\bigl(i_0+1+(l-2)k\bigr)\bigl(i_1+1-(l-1)k\bigr)(i_3+1-lk).
\end{equation}
Gathering \eqref{4.19} -- \eqref{4.21}, we have 
\begin{equation}\label{4.22}
\begin{split}
&\binom{i_0+i_1\tau+i_3\tau^3+\alpha-\frac{\alpha+1}r+\frac lr}\alpha\cr
=\,&\bigl(i_0+1+(l-2)k\bigr)\cdot 
\begin{cases}
(i_1+k)i_3&\text{if}\ l=0,\cr
i_1(i_3+1-k)&\text{if}\ l=1,\cr
\bigl(i_1+1-(l-1)k\bigr)(i_3+1-lk)&\text{if}\ 2\le l\le r-1.
\end{cases}
\end{split}
\end{equation}
Now by \eqref{4.18} and \eqref{4.22},
\[
\begin{split}
g_\alpha(x)=\,&(x-1)\sum_{0\le i_0,i_1,i_3\le 1}(-1)^{i_0+i_1+i_3}\frac{x^{r(i_0+i_1+i_3)}-1}{x^r-1}\cr
&\cdot\Bigl[(i_0+1-2k)(i_1+k)i_3+(i_0+1-k)i_1(i_3+1-k)x\cr
&+\sum_{l=2}^{r-1}\bigl(i_0+1+(l-2)k\bigr)\bigl(i_1+1-(l-1)k\bigr)(i_3+1-lk)x^l\Bigr]-1\cr
=\,&(x-1)\sum_{0\le i_0,i_1,i_3\le 1}(-1)^{i_0+i_1+i_3}\frac{x^{r(i_0+i_1+i_3)}-1}{x^r-1}\cr
&\cdot\Bigl[\sum_{l=0}^{r-1}\bigl(i_0+1+(l-2)k\bigr)\bigl(i_1+1-(l-1)k\bigr)(i_3+1-lk)x^l\cr
&-(i_0+1-2k)(i_1+i_3+k+1)-(i_0+1-k)(i_3+1-k)x\Bigr]-1.
\end{split}
\]
\end{proof}

Recall from \eqref{2.15} that for positive integers $\alpha$ with $\alpha\equiv-1\pmod r$ we have $h_\alpha=\x^{-1}g_\alpha\in(\Bbb Z[1/r])[\x]$.

\begin{lem}\label{L4.4}
Let $r\ge 3$. Assume that $\tau$ is a prime such that $\tau\equiv-1\pmod r$, $k:=(\tau+1)/r>2$ and $\tau\nmid\rho(r)$, where $\rho(r)$ is a nonzero integer, depending only on $r$, to be defined in \eqref{4.30}. Then
\begin{equation}\label{4.22.1}
\text{\rm gcd}_{\f_\tau[\x]}(h_\tau,h_{\tau^3},h_{\tau^4-2},h_{1+\tau+\tau^3},h_{2\tau+1})=1.
\end{equation}
\end{lem}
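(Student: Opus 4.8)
The plan is to argue by contradiction. Suppose the five polynomials in \eqref{4.22.1} have a common root $x$ in some extension of $\f_\tau$. Note first that $\tau$, $\tau^3$, $\tau^4-2$, $1+\tau+\tau^3$, $2\tau+1$ are all $\equiv -1\pmod r$, so the corresponding $h_\alpha$ are defined, and that $k>2$ forces $\tau=rk-1\ge 3r-1$, so $\tau$ is odd and $\tau>r$. We first rule out $x=0$: since $h_\alpha=\x^{-1}g_\alpha$, a zero root would mean $\x^2\mid g_\tau$, i.e.\ the coefficient of $\x$ in $g_\tau$ is $0$ in $\f_\tau$. A short expansion of \eqref{2.12} (recalling $g_\tau(0)=0$) shows that coefficient equals $(-1)^\tau-\sum_{i=1}^{\tau}(-1)^i\binom\tau i\binom{i+\tau-\tau/r}{\tau}$; reducing modulo $\tau$ --- where $\binom\tau i\equiv 0$ for $0<i<\tau$ and $\binom{2\tau-\tau/r}{\tau}\equiv 2-r^{-1}\pmod\tau$ (using that $\tau$ is odd) --- leaves $1-r^{-1}=(r-1)r^{-1}$, which is nonzero since $0<r-1<\tau$. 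Hence we may take $x\ne 0$, so $x$ is a common nonzero root of $g_\tau$, $g_{\tau^3}$, $g_{\tau^4-2}$, $g_{1+\tau+\tau^3}$, $g_{2\tau+1}$.

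Next, apply Lemma~\ref{L4.1} to the common nonzero root $x$ of $g_\tau$, $g_{\tau^3}$, $g_{\tau^4-2}$: it gives $x\in\f_{\tau^2}^*$, $x^r\in\f_\tau^*\setminus\{1\}$, and the relation \eqref{4.1}, with $(r+1)x-r\ne 0$. Now use the remaining two polynomials. Since $x^r\in\f_\tau^*\setminus\{1\}$ and $k=(\tau+1)/r>2$, Lemma~\ref{L4.3} applies to $g_{1+\tau+\tau^3}(x)=0$ and Lemma~\ref{L4.2} with $m=2$ applies to $g_{2\tau+1}(x)=0$ (legitimate because $r\ge 3\ge m$ and $k>2=2(m-1)$), yielding the explicit forms \eqref{4.17} and \eqref{4.11} (the latter with $m=2$) for these two vanishing quantities. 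In each, replace $k$ by $r^{-1}$ (valid in $\f_\tau$, since $kr=\tau+1\equiv 1$) and then eliminate $x^r$, $x^{2r}$, $x^{3r}$ by means of \eqref{4.1}; after multiplying through by suitable powers of the nonzero quantity $(r+1)x-r$ and of $r$, one obtains two polynomial equations $P_1(x)=0$ and $P_2(x)=0$, where $P_1,P_2\in(\Bbb Z[1/r])[\x]$ depend only on $r$ --- not on $\tau$ or on $\text{char}\,\f_\tau$.

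The last step invokes \eqref{4.30}, which furnishes a nonzero integer $\rho(r)$, depending only on $r$, obtained essentially as $\text{Res}_\x(P_1,P_2)$ with the leading coefficients of $P_1,P_2$ absorbed and all denominators cleared, so that $\rho(r)$ reduces correctly modulo $\tau$. The assertion $\rho(r)\ne 0$ is precisely that $P_1$ and $P_2$ are coprime in $\Bbb Q(r)[\x]$. Granting this, the common root $x\in\overline\f_\tau$ of $P_1$ and $P_2$ forces $\rho(r)\equiv 0\pmod\tau$, contradicting the hypothesis $\tau\nmid\rho(r)$. Therefore the five polynomials have no common root, i.e.\ \eqref{4.22.1} holds.

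The substantive obstacle lies in the claim behind \eqref{4.30}: that $P_1$ and $P_2$ are actually coprime over $\Bbb Q(r)$, equivalently that their resultant is a nonzero polynomial in $r$. Nothing proved so far prevents $g_{1+\tau+\tau^3}$ and $g_{2\tau+1}$ from sharing a nontrivial common factor once the relations $x^r\in\f_\tau^*\setminus\{1\}$ and \eqref{4.1} are imposed, so one must actually carry out the (rather heavy) resultant computation for the reduced forms of \eqref{4.11} and \eqref{4.17} and then argue that it does not vanish identically in $r$ --- for instance by specializing $r$ to small values or by a leading-term analysis. A more routine but still delicate companion task is to track the denominators $(r+1)\x-r$ and the powers of $r$ carefully enough that $P_1,P_2\in(\Bbb Z[1/r])[\x]$ and $\rho(r)$ emerges as an honest nonzero integer.
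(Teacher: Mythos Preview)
Your proposal is correct and follows the paper's approach: rule out $x=0$ via $h_\tau(0)\ne 0$, apply Lemma~\ref{L4.1}, then reduce $g_{1+\tau+\tau^3}(x)=0$ and $g_{1+2\tau}(x)=0$ through \eqref{4.17}, \eqref{4.11} (with $m=2$), and the substitution \eqref{4.1} to explicit polynomials $G_{1+\tau+\tau^3},\,G_{1+2\tau}\in\Bbb Z[\x]$ whose resultant defines $\rho(r)$. The paper dispatches the ``obstacle'' you flag by direct computation---$\rho(r)$ factors as $-(r-2)(r-1)^2(r+1)(2r-1)$ times a degree-$10$ integer polynomial with no integer roots $\ge 3$---and obtains $h_\tau(0)=1-k$ more cleanly from the closed form in \eqref{4.3} than your Lucas-type argument (in which, incidentally, $(\tau+1)/r=k$ rather than $\tau/r$ should appear).
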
 

\begin{proof}
$1^\circ$ We claim that $h_\tau(0)\ne 0$ in $\f_\tau$.

Using the first step of \eqref{4.3}, we have
\[
-g'_\tau(0)=1+(-1)(2-k)=-1+k.
\]
Since $k=(\tau+1)/r$, we have $1<k<\tau$. Thus $g'_\tau(0)\ne 0$ and hence $h_\tau(0)\ne 0$.

\medskip
$2^\circ$ Assume to the contrary that $h_\tau,h_{\tau^3},h_{\tau^4-2},h_{1+\tau+\tau^3},h_{2\tau+1}$ have a common root $x\in\overline\f_\tau$. By $1^\circ$, $x\ne 0$. By Lemma~\ref{4.1}, $x^r\in\f_\tau^*\setminus\{1\}$, $(r+1)x-r\ne 0$, and \eqref{4.1} holds.

By Lemma~\ref{L4.3}, we have \eqref{4.17}. We need to rewrite \eqref{4.17} in a form suitable for further computation. Let $y=x^r$. For any integer $k\ge 0$, define
\begin{equation}\label{4.23}
s_k=\sum_{l=0}^{r-1}l^k\x^l\in\Bbb Z[\x].
\end{equation}
(Note that $s_k$ also depends on $r$. Since $r$ is fixed, it is suppressed in the notation.) The polynomial $s_k$ satisfies the recursive relation
\begin{equation}\label{4.24}
\begin{cases}
s_0=\displaystyle\frac{\x^r-1}{\x-1},\cr
s_k=\displaystyle\frac 1{\x-1}\Bigl[(-1)^k\sum_{j=0}^{k-1}(-1)^j\binom kj s_j-(-1)^k+(r-1)^k\x^r\Bigr],\quad k>0.
\end{cases}
\end{equation}
We can rewrite \eqref{4.17} as
\begin{equation}\label{4.25}
\begin{split}
&g_{1+\tau+\tau^3}(x)\cr
=\,&(x-1)\sum_{0\le i_0,i_1,i_3\le 1}(-1)^{i_0+i_1+i_3}\frac{y^{i_0+i_1+i_3}-1}{y-1}\cr
&\cdot \bigl[a_0s_0(x)+a_1s_1(x)+a_2s_2(x)+a_3s_3(x)\cr
&-(i_0+1-2k)(i_1+i_3+k+1)-(i_0+1-k)(i_3+1-k)x\bigr]-1,
\end{split}
\end{equation}
where
\begin{equation}\label{4.26}
\begin{cases}
a_0=\,&1-k-2k^2+i_0+ki_0+i_1-2ki_1+i_0i_1+i_3-ki_3-2k^2i_3\cr
&+i_0i_3+ki_0i_3+i_1i_3-2ki_1i_3+i_0i_1i_3,\cr
a_1=\,&-k+4k^2+2k^3-2ki_0-k^2i_0+2k^2i_1-ki_0i_1+3k^2i_3-ki_0i_3+ki_1i_3,\cr
a_2=\,&-k^2-3k^3+k^2i_0-k^2i_1-k^2i_3,\cr
a_3=\,&k^3.
\end{cases}
\end{equation}
Combining \eqref{4.25} and \eqref{4.26} allows us to express $g_{1+\tau+\tau^3}(x)$ as a rational function in $x$ and $y$, and, with the substitutions $k=1/r$ and 
\[
y=\frac{x(rx+1-r)}{(r+1)x-r},
\]
that expression becomes
\begin{equation}\label{4.27}
g_{1+\tau+\tau^3}(x)=\frac{x(x-1)^2}{\bigl((r+1)x-r\bigr)^3}G_{1+\tau+\tau^3}(x),
\end{equation}
where 
\begin{equation}\label{4.28}
\begin{split}
&G_{1+\tau+\tau^3}(\x)\cr
=\,&\left(4 r^3-3 r+1\right) \x^4+\left(-9 r^3+6 r^2+5 r-1\right)
   \x^3+\left(6 r^3-5 r^2-7 r-1\right) \x^2\cr
&+\left(-r^3-2 r^2+11 r-7\right)\x+r^2 -3 r+2
\end{split}
\end{equation}

In the same way, and starting from \eqref{4.1} with $m=2$, we find that
\begin{equation}\label{4.28.1}
2g_{1+2\tau}(x)=\frac{x(x-1)^3}{\bigl((r+1)x-r\bigr)^3}G_{1+2\tau}(x),
\end{equation}
where 
\begin{equation}\label{4.29}
\begin{split}
G_{1+2\tau}(\x)
=\,&\left(12 r^3+2 r^2-8 r+2\right) \x^3+\left(-28 r^3+31 r^2+16
   r-7\right) \x^2\cr
&+\left(20 r^3-44 r^2+14 r+6\right) \x -4 r^3+11 r^2 -6 r-1.
\end{split}
\end{equation}

The resultant of $G_{1+\tau+\tau^3}$ and $G_{1+2\tau}$ is 
\begin{equation}\label{4.30}
\begin{split}
\rho(r):=\,&R(G_{1+\tau+\tau^3},G_{1+2\tau})\cr
=\,&-(r-2)(r-1)^2(r+1)(2r-1)(2616 r^{10}-4994 r^9+212 r^8-21785 r^7\cr
&+58174 r^6-30241 r^5+2258 r^4-30963
   r^3+12562 r^2-433 r-78 ).
\end{split}
\end{equation}
Since $r\ge 3$, we have $\rho(r)\ne 0$ in $\Bbb Z$. (It is easy to show that $\rho(r)$, as a polynomial in $r$, has no integer roots $\ge 3$.) Therefore we may choose $\tau$ such that $\tau\nmid \rho(r)$. However, since $x$ is a common root of $g_{1+\tau+\tau^3}$ and $g_{1+2\tau}$, it is a common root of $G_{1+\tau+\tau^3}$ and $G_{1+2\tau}$, which contradicts the fact that $R(G_{1+\tau+\tau^3},G_{1+2\tau})\not\equiv 0\pmod\tau$.
\end{proof}

\begin{lem}\label{L4.5}
Assume $r\ge 3$. We have
\[
\text{\rm gcd}_{\Bbb Q[\x]}\{h_\alpha:\alpha>0,\ \alpha\equiv -1\pmod r\}=1.
\]
\end{lem}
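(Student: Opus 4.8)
The plan is to prove the statement by exhibiting a specific finite subfamily of the $h_\alpha$ whose $\Bbb Q[\x]$-gcd is $1$, and to deduce this from the fact that for all but finitely many primes $\tau$ the same subfamily has gcd $1$ modulo $\tau$. Concretely, I would consider the five polynomials $h_\tau$, $h_{\tau^3}$, $h_{\tau^4-2}$, $h_{1+\tau+\tau^3}$, $h_{2\tau+1}$ that appear in Lemma~\ref{L4.4}, but now regard the exponents $\tau$ as an indeterminate rather than as a fixed prime. The point of Lemmas~\ref{L4.1}--\ref{L4.4} is that a common root forces $x^r\in\f_\tau^*\setminus\{1\}$ together with the two rational identities \eqref{4.27} and \eqref{4.28.1}, whose numerators $G_{1+\tau+\tau^3}$ and $G_{1+2\tau}$ have a resultant $\rho(r)$ that is a nonzero integer for every $r\ge 3$. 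The overall logic is: if $d:=\text{gcd}_{\Bbb Q[\x]}\{h_\alpha\}$ had positive degree, then $d$ would have a root $x$ in $\overline{\Bbb Q}$, hence (clearing denominators and reducing modulo a suitable prime) a root in $\overline\f_\tau$ of all the reductions $h_\alpha\bmod\tau$ for infinitely many $\tau$; applying Lemma~\ref{L4.4} with such a $\tau$ chosen so that $\tau\nmid\rho(r)$ and $k=(\tau+1)/r>2$ gives a contradiction.

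The first concrete step is to reduce from the infinite gcd over $\Bbb Q$ to the finite gcd in Lemma~\ref{L4.4}. Since $\{h_\alpha\}$ generates an ideal of the PID $\Bbb Q[\x]$, finitely many members already generate it; in particular $d:=\text{gcd}_{\Bbb Q[\x]}\{h_\alpha\}$ divides every $h_\alpha$, so it suffices to show $d=1$, and for that it suffices to find one value of $\tau$ (as an integer exponent) for which $d$ reduces well and for which the five polynomials $h_\tau,h_{\tau^3},h_{\tau^4-2},h_{1+\tau+\tau^3},h_{2\tau+1}$ are coprime in $\Bbb Q[\x]$. The cleanest route: pick an integer $n\equiv -1\pmod r$ with $n$ prime, $n\nmid\rho(r)$, $n\nmid(\text{denominators of coefficients of the }h_\alpha\text{ involved, cleared of the factor }r)$, and $(n+1)/r>2$; then the reduction mod $n$ of $\text{gcd}_{\Bbb Q[\x]}(h_n,h_{n^3},h_{n^4-2},h_{1+n+n^3},h_{2n+1})$ divides $\text{gcd}_{\f_n[\x]}(h_n,\dots)$, which is $1$ by Lemma~\ref{L4.4}. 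A degree-preserving reduction argument (choosing $n$ so it does not divide the leading coefficients either) then forces the rational gcd of these five polynomials to be $1$, hence $d=1$.

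One subtlety to handle carefully is the passage between "$h_\alpha$ over $\Bbb Z[1/r]$" and "$h_\alpha$ over $\f_\tau$": the coefficients of $h_\alpha$ lie in $\Bbb Z[1/r]$, so reduction modulo a prime $\tau\nmid r$ makes sense, and it is exactly this reduction that Lemmas~\ref{L4.1}--\ref{L4.4} manipulate (they repeatedly use $kr\equiv 1\pmod t$). I must make sure the chosen $\tau$ avoids $r$ (automatic once $\tau>r$ is prime and $\tau\equiv-1\pmod r$, which forces $\tau\nmid r$), avoids $\rho(r)$, and is large enough that $k=(\tau+1)/r>2$, i.e. $\tau>2r-1$; all of these exclude only finitely many primes, and Dirichlet's theorem on primes in the arithmetic progression $-1\pmod r$ supplies infinitely many candidates, so a valid $\tau$ exists. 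The main obstacle is purely bookkeeping rather than conceptual: one must verify that the reduction map $\Bbb Z[1/r][\x]\to\f_\tau[\x]$ does not drop degrees or collapse the gcd — i.e. that the primes $\tau$ where something goes wrong form a finite set — so that Lemma~\ref{L4.4}, which is stated for $\f_\tau$, can be lifted back to a statement in $\Bbb Q[\x]$. Once that is in place the conclusion $\text{gcd}_{\Bbb Q[\x]}\{h_\alpha:\alpha>0,\ \alpha\equiv-1\pmod r\}=1$ is immediate, since this rational gcd divides the rational gcd of the five specific polynomials, which we have just shown is $1$.
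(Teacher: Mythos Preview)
Your proposal is correct and follows essentially the same route as the paper: assume the full gcd $d$ has positive degree, take it primitive in $\Bbb Z[\x]$, choose a prime $\tau\equiv-1\pmod r$ with $(\tau+1)/r>2$, $\tau\nmid\rho(r)$, and $\tau$ not dividing the leading coefficient of $d$, then reduce modulo $\tau$ and invoke Lemma~\ref{L4.4} to obtain a contradiction. This is exactly the paper's argument.

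One small point of care in your ``cleanest route'' paragraph: you phrase the degree-preservation step in terms of the rational gcd of the \emph{five} polynomials $h_n,h_{n^3},\dots$, but that gcd depends on $n$, so you cannot choose $n$ in advance to avoid its leading coefficient without circularity. The fix---which you already have in your first paragraph and which is what the paper does---is to work with the \emph{full} gcd $d$, which is independent of $\tau$; since $d$ divides each of the five $h_\alpha$ for \emph{every} admissible $\tau$, one may freely choose $\tau$ avoiding the (fixed) leading coefficient of $d$, and then $\bar d\mid\text{gcd}_{\f_\tau[\x]}(h_\tau,\dots)=1$ forces $\deg d=0$.
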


\begin{proof} Assume the contrary; that is, there exists a primitive polynomial $d\in\Bbb Z[\x]$ with $\deg d>0$ such that $d\mid h_\alpha$ in $\Bbb Q[\x]$ for all $\alpha>0$ with $\alpha\equiv -1\pmod r$. Choose a prime $\tau$ satisfying the conditions in Lemma~\ref{L4.4} such that $\tau$ does not divide the leading coefficient of $d$, that is, $\deg_{\f_\tau[\x]}d=\deg_{\Bbb Q[\x]}d$. For all $\alpha>0$ with $\alpha\equiv -1\pmod r$, since $h_\alpha\in(\Bbb Z[1/r])[\x]$, we have $d\mid h_\alpha$ in $\f_\tau[\x]$. This a contradiction to \eqref{4.22.1}.
\end{proof}

\begin{lem}\label{L4.6}
Fix $r\ge 3$, and assume that $f$ is a PP of $\f_{q^2}$, where $q\ge r$, $q+1\equiv 0\pmod r$ and $a^{q+1}\ne 1$ ($a\in\f_{q^2}^*$). Then there are only finitely many possibilities for $p:=\text{\rm char}\,\f_q$.
\end{lem}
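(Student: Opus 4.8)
The plan is to convert the permutation property of $f$ into the assertion that a single element $v\in\overline\f_p$ is a common root of all the reduced polynomials $\bar h_\alpha\in\f_p[\x]$ ($\alpha>0$, $\alpha\equiv-1\pmod r$), and then to derive a contradiction from Lemma~\ref{L4.5} for all but finitely many $p$. First I would record that $p\nmid r$: if $p\mid r$ then $p\mid q$, while $r\mid q+1$ forces $p\mid q+1$, hence $p\mid 1$, which is absurd. Consequently each $h_\alpha\in(\Bbb Z[1/r])[\x]$ (see \eqref{2.15}) reduces modulo $p$ to a polynomial $\bar h_\alpha\in\f_p[\x]$. Since $f$ is a PP of $\f_{q^2}$, $\sum_{x\in\f_{q^2}}f(x)^s=0$ for all $1\le s\le q^2-2$; taking $s=\alpha+(q-1-\alpha)q=(q-1)(q-\alpha)$ for $0\le\alpha\le q-1$ (which satisfies $1\le s\le q^2-2$ since $q\ge r\ge 3$) and using \eqref{2.2} together with $a\ne 0$ gives $S_q(\alpha,a)=0$ for every such $\alpha$. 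For those $\alpha$ that moreover satisfy $\alpha\equiv-1\pmod r$ and $q\ge(r-1)(\alpha+2)$, Lemma~\ref{L2.4} then yields $(-a)^{\frac{\alpha+1}rq}\bigl(\sum_{l=0}^{r-1}v^l\bigr)g_\alpha(v)=0$ with $v=(-a)^{-\frac{q+1}rq}$. Here $v\ne 0$, and $v^r=(-a)^{-q(q+1)}=(a^{q+1})^{-q}=(a^{q+1})^{-1}\ne 1$ because $(-1)^{q+1}=1$ and $a^{q+1}\in\f_q^*\setminus\{1\}$; hence $\sum_{l=0}^{r-1}v^l\ne 0$, and since $(-a)^{\frac{\alpha+1}rq}\ne 0$ we conclude $g_\alpha(v)=0$, i.e.\ $h_\alpha(v)=0$ in $\overline\f_p$.

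Next I would pass to a finite subfamily. By Lemma~\ref{L4.5}, $\text{gcd}_{\Bbb Q[\x]}\{h_\alpha:\alpha>0,\ \alpha\equiv-1\pmod r\}=1$; since $\Bbb Q[\x]$ is a principal ideal domain, finitely many of these polynomials, say $h_{\alpha_1},\dots,h_{\alpha_k}$ with $\alpha_1,\dots,\alpha_k$ depending only on $r$, already generate the unit ideal, so $1=\sum_{i=1}^k w_ih_{\alpha_i}$ with $w_i\in\Bbb Q[\x]$. Clearing the denominators of the coefficients of the $w_i$, there is a nonzero integer $N=N(r)$, depending only on $r$, and polynomials $u_i\in\Bbb Z[\x]$ with $N=\sum_{i=1}^k u_ih_{\alpha_i}$; thus $N$ lies in the ideal of $(\Bbb Z[1/r])[\x]$ generated by $h_{\alpha_1},\dots,h_{\alpha_k}$.

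Finally, put $q_1(r)=(r-1)\bigl(\max_i\alpha_i+2\bigr)$ and suppose $q\ge q_1(r)$. Applying the first paragraph to $\alpha=\alpha_1,\dots,\alpha_k$ shows that $v$ is a common root of $\bar h_{\alpha_1},\dots,\bar h_{\alpha_k}$ in $\overline\f_p$. On the other hand, if $p\nmid N$, reducing the relation $N=\sum_i u_ih_{\alpha_i}$ modulo $p$ (legitimate because $p\nmid r$) produces a nonzero constant of $\f_p[\x]$ in the ideal generated by $\bar h_{\alpha_1},\dots,\bar h_{\alpha_k}$, so these polynomials have no common root in $\overline\f_p$---a contradiction. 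Hence $p\mid N$. Since the remaining cases $q<q_1(r)$ involve only finitely many prime powers, $p$ lies in the finite set consisting of the prime divisors of $N$ together with the primes below $q_1(r)$, a set depending only on $r$; this proves the lemma. The substantive work is already contained in Lemma~\ref{L4.5} (and the resultant estimate behind Lemma~\ref{L4.4}); the only delicate points in the present deduction are ensuring that the hypothesis $q\ge(r-1)(\alpha+2)$ of Lemma~\ref{L2.4} is met for the fixed family $\alpha_1,\dots,\alpha_k$ once $q$ is large (handled by setting aside the small $q$) and the specialization-modulo-$p$ step, which requires both $p\nmid r$ and $p\nmid N$.
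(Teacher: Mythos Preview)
Your proof is correct and follows essentially the same route as the paper: use Lemma~\ref{L4.5} to extract a finite subfamily $h_{\alpha_1},\dots,h_{\alpha_k}$ with a Bezout relation $\sum u_ih_{\alpha_i}=N\in\Bbb Z\setminus\{0\}$, then for $q$ large invoke Lemma~\ref{L2.4} to produce a common root $v$ and conclude $p\mid N$, leaving only finitely many small $q$. You supply some details the paper leaves implicit---notably that $p\nmid r$ (so the reduction of $(\Bbb Z[1/r])[\x]$ modulo $p$ is legitimate) and that $\sum_{l=0}^{r-1}v^l\ne 0$ because $v^r=(a^{q+1})^{-1}\ne 1$---but the architecture is identical.
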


\begin{proof}
By Lemma~\ref{L4.5}, there exist $0<\alpha_1<\cdots<\alpha_m$ with $\alpha_i\equiv -1\pmod r$, $1\le r\le m$, such that
\[
\text{gcd}_{\Bbb Q[\x]}(h_{\alpha_1},\dots,h_{\alpha_m})=1.
\]
Therefore, there exist $a_1,\dots,a_m\in\Bbb Z[\x]$ such that 
\begin{equation}\label{4.31}
a_1 h_{\alpha_1}+\cdots+a_m h_{\alpha_m}=A\in\Bbb Z\setminus\{0\}.
\end{equation}
If $q<(r-1)(\alpha_m+2)$, there are only finitely many possibilities for $p$. If $q\ge (r-1)(\alpha_m+2)$, by Lemma~\ref{L2.4}, $h_{\alpha_1},\dots,h_{\alpha_m}$ have a common root in $\overline\f_p$. It follows from \eqref{4.31} that $p\mid A$. Hence there are only finitely many possibilities for $p$.
\end{proof}


\subsection{Finiteness of the possibilities of $q$ in a given characteristic}\

\begin{lem}\label{L4.7}
Fix $r\ge 3$, and let $p$ be a prime such that there is a power $\tau$ of $p$ with $\tau\equiv -1\pmod r$. If $f$ is a PP of $\f_{q^2}$, where $\text{char}\,\f_q=p$, $q\equiv -1\pmod r$, $q\ge \tau^4$ and $a^{q+1}\ne 1$ ($a\in\f_{q^2}^*$), then $p\in\{2,3,5\}$.
\end{lem}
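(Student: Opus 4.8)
The plan is to squeeze, from the permutation property, enough vanishing power sums to pin down the element $v:=(-a)^{-\frac{q+1}rq}\in\f_{q^2}$, to locate $v$ by means of Lemma~\ref{L4.1}, and then to show that the resulting constraints on $v$ are incompatible once $q$ (and, by a preliminary enlargement, the auxiliary power $\tau$) is large, unless $p\in\{2,3,5\}$. Recall that $f$ is a PP of $\f_{q^2}$ precisely when $0$ is its only root and $\sum_{x\in\f_{q^2}}f(x)^s=0$ for $1\le s\le q^2-2$; by \eqref{2.2} this forces $S_q(\alpha,a)=0$ for every $0\le\alpha\le q-1$. Fix $\alpha\equiv-1\pmod r$ small enough relative to $q$ that Lemma~\ref{L2.4} applies to $\alpha$; then $S_q(\alpha,a)=(-a)^{\frac{\alpha+1}rq}\bigl(\sum_{l=0}^{r-1}v^l\bigr)g_\alpha(v)$. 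Since $v^r=(-a)^{-(q+1)q}=\bigl((-a)^{q+1}\bigr)^{-1}$ (using $(-a)^{q+1}\in\f_q^*$) and $(-a)^{q+1}\ne1$ because $a^{q+1}\ne1$, we have $v^r\ne1$, hence $v\ne1$ and $\sum_{l=0}^{r-1}v^l=\frac{v^r-1}{v-1}\ne0$; therefore $g_\alpha(v)=0$. Applying this with $\alpha=\tau,\tau^3,\tau^4-2$ (all $\equiv-1\pmod r$) and invoking Lemma~\ref{L4.1} — legitimate since $p\nmid r$, as $r\mid q+1$ — we obtain $v\in\f_{\tau^2}^*$, $v^r\in\f_\tau^*\setminus\{1\}$, $(r+1)v-r\ne0$, and the relation \eqref{4.1}, equivalently $P_1(v)=0$ with $P_1:=(r+1)\x^r-r\x^{r-1}-r\x+(r-1)$.

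Next I would feed in more exponents. Applying the reduction above to $\alpha=1+\tau+\tau^3$ and to $\alpha=m-1+m\tau$ for each $m$ with $2\le m\le r$ and $k:=(\tau+1)/r>2(m-1)$, and then using Lemmas~\ref{L4.2} and~\ref{L4.3} together with the substitution \eqref{4.1} and the nonvanishing of $v$, $v-1$, $(r+1)v-r$, one finds that $v$ is a common root of $G_{1+\tau+\tau^3}$, of $G_{1+2\tau}$, and of the further polynomials $G_{m-1+m\tau}$ produced exactly as in \eqref{4.28}--\eqref{4.29}. The decisive feature is that the coefficients of all these $G$'s are fixed polynomials in $r$, independent of $\tau$; moreover, by enlarging $\tau$ first (at the cost of a larger lower bound $\tau^4$ on $q$) one makes $k$, hence the number of available $m$'s, as large as one likes. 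One then eliminates: when $k>2$ the five polynomials $h_\tau,h_{\tau^3},h_{\tau^4-2},h_{1+\tau+\tau^3},h_{2\tau+1}$ have no common root over $\overline\f_p$ unless $p\mid\rho(r)$ (Lemma~\ref{L4.4}), and bringing in the polynomials $G_{m-1+m\tau}$ together with $P_1$ cuts this down to $p\mid N$ for a nonzero integer $N$ that does not depend on $r$ and whose only prime divisors are $2,3,5$; for the small values of $r$ (where $m\le r$ limits the usable $G_{m-1+m\tau}$) this elimination is carried out for each $r$ individually, as with $\rho(r)$ in \eqref{4.30}. Either way $p\in\{2,3,5\}$.

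The main obstacle is precisely this last elimination: showing, uniformly in $r\ge3$ — together with explicit checks for the finitely many small $r$, and for the boundary regimes $k=1$ (i.e. $\tau=r-1$) and $k=2$ (i.e. $\tau=2r-1$), where Lemmas~\ref{L4.3}--\ref{L4.4} are unavailable and one has only Lemma~\ref{L4.1} and the Frobenius-derived relation $(r+1)v^{\tau+1}-r(v+v^\tau)+(r-1)=0$ (obtained by applying $x\mapsto x^\tau$ to \eqref{4.1} and using $v^r\in\f_\tau$) — that the constraints on $v$ cannot be met in any characteristic $p>5$. A routine but necessary secondary point is the bookkeeping verifying that $q\ge\tau^4$ (for the chosen $\tau$) really does place each exponent used — $\tau^4-2$, $1+\tau+\tau^3$, $2\tau+1$, and each $m-1+m\tau$ — within the range where Lemma~\ref{L2.4} is valid, after allowing for the factor $r-1$ in that hypothesis.
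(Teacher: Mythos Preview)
Your setup is correct and matches the paper: from the PP hypothesis and Lemma~\ref{L2.4} you obtain $g_\alpha(v)=0$ for the relevant $\alpha$, Lemma~\ref{L4.1} then localizes $v$ and gives the relation \eqref{4.1}, and Lemmas~\ref{L4.2}--\ref{L4.3} convert the vanishing of $g_\alpha(v)$ into the vanishing of explicit polynomials $G_\alpha(v)$ whose coefficients lie in $\Bbb Z[r]$ and do not depend on $\tau$. But you correctly identify, and then leave unresolved, the decisive step: the uniform-in-$r$ elimination that forces $p\in\{2,3,5\}$. As written, your proposal is a plan that stops exactly where the real content begins.

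The paper closes this gap by a concrete \emph{double resultant} computation, not by enlarging $\tau$ or treating small $r$ case by case. It fixes four exponents $\alpha\in\{1+\tau+\tau^3,\ 1+2\tau,\ 2+3\tau,\ 3+4\tau\}$ and produces $G_\alpha\in\Bbb Z[r][\x]$ (your $P_1$ is not used separately; \eqref{4.1} is only a substitution device). Since $v$ is a common root of all four $G_\alpha$ in $\overline\f_p$, each pairwise resultant $R(G_\alpha,G_\beta)\in\Bbb Z[r]$ must vanish modulo $p$. After a short direct argument showing $r\not\equiv 1,2\pmod p$ (your invocation of Lemma~\ref{L4.4} does not cover this, and Lemma~\ref{L4.4} as stated requires $\tau$ prime), the factors $r$, $r-1$, $r-2$ are stripped from each $R(G_\alpha,G_\beta)$ to obtain $R'(G_\alpha,G_\beta)\in\Bbb Z[r]$, each of which still has $r\bmod p$ as a root. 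Now eliminate $r$: the resultants (in the variable $r$) of pairs of these $R'$ are \emph{integers}, independent of $r$, and $p$ must divide each. A direct computation shows that the gcd of three such integers is $2\cdot 3^2\cdot 5$, forcing $p\in\{2,3,5\}$. This two-stage elimination --- first in $\x$, then in $r$ --- is the missing idea; once you have it, there is no need for arbitrarily many $G_{m-1+m\tau}$, no separate treatment of small $r$, and no analysis of the boundary regimes $k=1,2$.
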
 

\begin{proof}
Since $f$ is a PP of $\f_{q^2}$ and $q\ge \tau^4$, it follows that $g_\alpha$, $\alpha\in\{\tau, 1+2\tau, 2+3\tau, 3+4\tau, 1+\tau+\tau^3, -2+\tau^4\}$, have a common root $x\in\overline\f_p\setminus\{0\}$  with $x^r\in\f_\tau^*\setminus\{1\}$. By Lemma~\ref{L4.1}, $(r+1)x-r\ne 0$, and \eqref{4.1} holds. The equations \eqref{4.27} and \eqref{4.28.1} remain valid. Moreover, $g_{2+3\tau}(x)$ and $g_{3+4\tau}(x)$ can be computed in a way similar to the computation of $g_{1+2\tau}(x)$. To sum up, we have 
\begin{equation}\label{4.32}
g_{1+\tau+\tau^3}(x)=\frac{x(x-1)^2}{\bigl((r+1)x-r\bigr)^3}G_{1+\tau+\tau^3}(x),
\end{equation}
\begin{equation}\label{4.33}
2g_{1+2\tau}(x)=\frac{x(x-1)^3}{\bigl((r+1)x-r\bigr)^3}G_{1+2\tau}(x),
\end{equation}
\begin{equation}\label{4.34}
12g_{2+3\tau}(x)=\frac{(x-1)^2}{\bigl((r+1)x-r\bigr)^5}G_{2+3\tau}(x),
\end{equation}
\begin{equation}\label{4.35}
144g_{3+4\tau}(x)=\frac{(x-1)^2}{\bigl((r+1)x-r\bigr)^7}G_{3+4\tau}(x),
\end{equation}
where
\begin{equation}\label{4.36}
\begin{split}
&G_{1+\tau+\tau^3}(\x)\cr
=\,&\left(4 r^3-3 r+1\right) \x^4+\left(-9 r^3+6 r^2+5 r-1\right)
   \x^3+\left(6 r^3-5 r^2-7 r-1\right) \x^2\cr
&+\left(-r^3-2 r^2+11 r-7\right)\x+r^2 -3 r+2,
\end{split}
\end{equation}
\begin{equation}\label{4.37}
\begin{split}
G_{1+2\tau}(\x)
=\,&\left(12 r^3+2 r^2-8 r+2\right) \x^3+\left(-28 r^3+31 r^2+16
   r-7\right) \x^2\cr
&+\left(20 r^3-44 r^2+14 r+6\right) \x -4 r^3+11 r^2 -6 r-1,
\end{split}
\end{equation}
\begin{equation}\label{4.38}
\begin{split}
&G_{2+3\tau}(\x)\cr
=\,&2 \left(480 r^5-56 r^4-612 r^3+574 r^2-210 r+4\right) \x^8\cr
&+2 \left(-2904
   r^5+2390 r^4+2655 r^3-4033 r^2+2004 r-238\right) \x^7\cr
&+2 \left(7464
   r^5-11148 r^4-1230 r^3+9711 r^2-6477 r+1266\right) \x^6\cr
&+2 \left(-10572
   r^5+22426 r^4-9387 r^3-8690 r^2+9177 r-2414\right) \x^5\cr
&+2 \left(8940
   r^5-24118 r^4+19116 r^3-502 r^2-5736 r+1958\right) \x^4\cr
&+2 \left(-4560
   r^5+14658 r^4-15783 r^3+5571 r^2+1002 r-762\right) \x^3\cr
&+2 \left(1344
   r^5-4928 r^4+6354 r^3-3305 r^2+375 r+142\right) \x^2\cr
&+2 \left(-204
   r^5+830 r^4-1197 r^3+728 r^2-147 r-10\right) \x\cr
&+2 \left(12 r^5-54 r^4+84
   r^3-54 r^2+12 r\right),
\end{split}
\end{equation}
\begin{equation}\label{4.39}
\begin{split}
&G_{3+4\tau}(\x)\cr
=\,&\bigl(226800
   r^7-211140 r^6-188340 r^5+388485 r^4-255456 r^3+74058 r^2-6204r\cr
&+597\bigr) \x^{12}
+\bigl(-2030400 r^7+3135420 r^6+355372 r^5-3641401
   r^4+3175096 r^3\cr
&-1217746 r^2+175660 r-7873\bigr) \x^{11}
+\bigl(8169120
   r^7-17438796 r^6+6486732 r^5\cr
&+12248029 r^4-15562968 r^3+7606042
   r^2-1549092 r+94501\bigr) \x^{10}\cr
&+\bigl(-19474560 r^7+52594740
   r^6-39521644 r^5-13998585 r^4+38790944 r^3\cr
&-24354306 r^2+6413804
   r-556953\bigr) \x^9
+\bigl(30522960 r^7-98911656 r^6\cr
&+106243584
   r^5-18404742 r^4-50668800 r^3+44318196 r^2-14546448 r\cr
&+1649658\bigr)
   \x^8
+\bigl(-32977152 r^7+123769656 r^6-168070536 r^5+81291294
   r^4\cr
&+25818720 r^3-46698708 r^2+19296552 r-2678946\bigr)
   \x^7
+\bigl(25026624 r^7\cr
&-106072632 r^6+170852376 r^5-120540462
   r^4+17400096 r^3+26245236 r^2\cr
&-15232440 r+2516178\bigr)
   \x^6
+\bigl(-13329792 r^7+62623944 r^6-115027224 r^5\cr
&+100925334
   r^4-36600336 r^3-4219764 r^2+6945288 r-1415658\bigr) \x^5\cr
&+\bigl(4888080
   r^7-25116660 r^6+51237324 r^5-52014495 r^4+25779696 r^3\cr
&-3616638
   r^2-1606668 r+481329\bigr) \x^4
+\bigl(-1183680 r^7+6598476 r^6\cr
&-14687844 r^5
+16566267 r^4-9691800 r^3+2414310 r^2+74028 r-96093\bigr)
   \x^3\cr
&+\bigl(175392 r^7-1059516 r^6+2548316 r^5-3117839 r^4+2021816
   r^3-621326 r^2\cr
&+43484 r+10249\bigr) \x^2
+\bigl(-13824 r^7+91332
   r^6-236988 r^5+310403 r^4\cr
&-215952 r^3+73910 r^2-8436 r-445\bigr) \x
+432 r^7-3168 r^6+8872 r^5-12288 r^4\cr
&+8944 r^3-3264 r^2+472r.
\end{split}
\end{equation}

Assume to the contrary that $p\notin\{2,3,5\}$.

\medskip
$1^\circ$ We claim that $r\ne 1,2$ in $\f_p$.

If $r=1$ in $\f_p$, by \eqref{4.36} and \eqref{4.37},
\[
\begin{split}
G_{1+\tau+\tau^3}(\x)\,&=\x(1-7\x+\x^2+2\x^3),\cr
G_{1+2\tau}(\x)\,&=2(1-2\x+6\x^2+4\x^3).
\end{split}
\]
We have
\[
A\cdot(1-7\x+\x^2+2\x^3)+B\cdot(1-2\x+6\x^2+4\x^3)=3\cdot 31,
\]
where 
\[
A=4(-5+26\x+17\x^2),\qquad B=113-18\x-34\x^2.
\]
When $p\ne 31$, this contradicts the fact that $x$ is a common root of $G_{1+\tau+\tau^3}$ and $G_{1+2\tau}$. When $p=31$, we find that 
\[
\text{gcd}_{\f_{31}[\x]}\bigl(1-7\x+\x^2+2\x^3,\; G_{2+3\tau}(\x)\bigr)=1,
\]
which is also a contradiction. 

Similarly, if $r=2$ in $\f_p$, we have
\[
\begin{split}
G_{1+\tau+\tau^3}(\x)\,&=\x(\x-1)(3\x-1)(9\x-1),\cr
G_{1+2\tau}(\x)\,&=1+18\x-75\x^2+90\x^3,
\end{split}
\]
and
\[
A\cdot(3\x-1)(9\x-1)+B\cdot(1+18\x-75\x^2+90\x^3)=2\cdot 89,
\]
where
\[
A=101-210\x-120\x^2,\qquad B=77+36\x.
\]
When $p\ne 89$, we have a contradiction. When $p=89$, we find that 
\[
\text{gcd}_{\f_{89}[\x]}\bigl((3\x-1)(9\x-1),\; G_{2+3\tau}(\x)\bigr)=1,
\]
which is again a contradiction.

\medskip
$2^\circ$ We computed the resultant $R(G_\alpha, G_\beta)$ for all $\alpha,\beta\in\{1+\tau+\tau^3,\, 1+2\tau,\, 2+3\tau,\, 3+4\tau\}$, $\alpha\ne \beta$. The following selected results are to used in the next step.
\[
\begin{split}
&R(G_{1+\tau+\tau^3}, G_{1+2\tau})\cr
=\,&-(r-2)(r-1)^2(r+1)(2r-1)(2616 r^{10}-4994 r^9+212 r^8-21785 r^7\cr
&+58174 r^6-30241 r^5+2258 r^4-30963
   r^3+12562 r^2-433 r-78 ),
\end{split}
\]
\[
\begin{split}
&R(G_{1+\tau+\tau^3}, G_{2+3\tau})\cr
=\,&-432(r-2)(r-1)^2 \bigl( 904200192 r^{24}-26766761472 r^{23}+331824091392 r^{22}\cr
&-2431984416768
   r^{21}+12259306904112 r^{20}-46191235200768 r^{19}\cr
&+136361704467312
   r^{18}-323448347180178 r^{17}+622072535670810 r^{16}\cr
&-963022456113483
   r^{15}+1163874205192488 r^{14}-1011383557230121 r^{13}\cr
&+458542507975098
   r^{12}+246769336119765 r^{11}-714942562087848 r^{10}\cr
&+750787901224431
   r^9-493424131713792 r^8+213252937500746 r^7\cr
&-55526043586236
   r^6+4871506549056 r^5+1905998384136 r^4\cr
&-727704486624 r^3+99138790224
   r^2-4740664936 r-30723984  \bigr),
\end{split}
\]
\[
\begin{split}
&R(G_{1+2\tau}, G_{2+3\tau})\cr
=\,&64(r-1)^2\bigl( 3034644480 r^{24}-32529595392 r^{23}+657287337600 r^{22}\cr
&-8299978411200
   r^{21}+56251922766208 r^{20}-235334307782820 r^{19}\cr
&+667348751791160
   r^{18}-1357351029780397 r^{17}+2048898884042996 r^{16}\cr
&-2339185451094502
   r^{15}+2030190962218068 r^{14}-1317041479048822 r^{13}\cr
&+596228225529184
   r^{12}-146983940173022 r^{11}-9832362841412 r^{10}\cr
&+18243146047288
   r^9-3480285918736 r^8-419376654818 r^7+134266774188 r^6\cr
&+4006393734
   r^5-501750288 r^4+168774858 r^3+13728780 r^2-279387 r-26676  \bigr),
\end{split}
\]
\begin{align*}
&R(G_{2+3\tau}, G_{3+4\tau})\cr
=\,&-2166612408926208(r-1)^4r\cr
&\bigl(  
563487703892102831444078594988441600
   r^{61}\cr
&-726125546232691779026874827707546337280
   r^{60}\cr
&+59045110242590913671812425635932636446720
   r^{59}\cr
&-1543744791046933080469394307821363420725248
   r^{58}\cr
&+17635449249902967455768645935833784434819072
   r^{57}\cr
&-13242432899897278662960866425758484852113408 
   r^{56}\cr
&-2713152307304734984112784294298843188484374528
   r^{55}\cr
&+49167113785520627862051507889328415696639492096
   r^{54}\cr
&-532005508889912404066224212737203577299136413696 
   r^{53}\cr &+4232941536010391592610119145834769263557334794240
   r^{52}\cr &-26668308257430003986301692253261472660483288334336
   r^{51}\cr &+138223679803630839988472126930234721490261656731648
   r^{50}\cr &-603371606772948308241133678928919829772581944918016
   r^{49}\cr &+2253263036525361481560649801335897811225932905111552
   r^{48}\cr &-7277627939189346602743525258539122393315807057145856   
   r^{47}\cr &+20482050195727748294625975698698552545832447378681856
   r^{46}\cr &-50469241326694886540303378829939924808940534437609472
   r^{45}\cr &+109116015700575168000559138444874729250457665549625344
   r^{44}\cr &-206833001455966937200391885170167363806880207397885440
   r^{43}\cr &+342067904435488966844808494037156843091882241623969024
   r^{42}\cr &-487811231347075610016324356763534517180367913784825216
   r^{41}\cr &+584233686801679434205060425125458216643529330954416288
   r^{40}\cr &-549554071172728542687909751281642717953409510206490544
   r^{39}\cr &+314223131189797714183700975565179370195444620345276672
   r^{38}\cr &+133581062853313781625442620551320263688679792640788912
   r^{37}\cr &-719005576269866309174336232009072883958528439583259936
   r^{36}\cr &+1294312372830884925102390208243282179982605190318985808
   r^{35}\cr &-1697083029482069169772622250712023278249847922213843920
   r^{34}\cr &+1822711929635067366866037068545821855495708719534629372
   r^{33}\cr &-1668072590352754776555877663405787031320991981331262266
   r^{32}\cr &+1321725918348941131951214783980893869255330910058016021
   r^{31}\cr &-911778936425557232224283842471797963685627465472456110
   r^{30}\cr &+546997770484831420252392693291747938643889360835758007
   r^{29}\cr &-283354905430826711294868071729328693370531366772786260
   r^{28}\cr &+124834211605604793196403392309875462890600770988324881
   r^{27}\cr &-45393447956785044057507553121051427403244796750690426
   r^{26}\cr &+12739990012931826396381326370346876043294784653218803
   r^{25}\cr &-2216341781295328849737524626206897519710481003439832
   r^{24}\cr &-111880124566904056707989666685359140477899452943199
   r^{23}\cr &+251522659926490301938210279823844297566176828690850
   r^{22}\cr &-101220601216222132408130216769135257528871064892053
   r^{21}\cr &+22624675343227091700489497078929605844367556828068
   r^{20}\cr &-1908972906824956271173431273409855682834097062467
   r^{19}\cr &-590736968567860662346653525293252204666240324570
   r^{18}\cr &+254845352675862913643564775851609223374663877663
   r^{17}\cr &-40310192660823848996399573166944148181773222708
   r^{16}\cr &+424814302621371238653289731946939032578004175
   r^{15}\cr &+898345480967916525141130472885228750811904742
   r^{14}\cr &-94674423996970582089569701194783717008390539
   r^{13}\cr &-14511771149946680482629782799033859030393612
   r^{12}\cr &+2734787899038559469591406410941339153490723
   r^{11}\cr &+423933198428320883122423214907073858661954
   r^{10}\cr &-142158801188850304531189733212379758358615 
   r^9\cr &+8803657260993483198112895867761687002168
   r^8\cr & +708140687476573312595409552324488896491
   r^7\cr & -57406290119794767351844048706054692826
   r^6\cr & -2481072222126354679450524375955264519
   r^5\cr & -416547665183611789472998210835162020
   r^4\cr & +39478358602778799084783256284417023
   r^3\cr & -165550574731176119297479940971294
   r^2\cr & +116564567854207875652415637207689 r\cr & -6073037054382858461471337919250
\bigr).
\end{align*}
Let $R'(G_\alpha,G_\beta)$ denote the expression obtained from $R(G_\alpha,G_\beta)$ with the factors $r$, $r-1$ and $r-2$ removed. For $\alpha_1,\beta_1,\alpha_2,\beta_2\in\{1+\tau+\tau^3,\, 1+2\tau,\, 2+3\tau,\, 3+4\tau\}$, we treat $R'(G_{\alpha_1},G_{\beta_1})$ and $R'(G_{\alpha_2},G_{\beta_2})$ as polynomials in $r$ with integer coefficients, whose resultant $R(R'(G_{\alpha_1},G_{\beta_1}), R'(G_{\alpha_2},G_{\beta_2}))$ is thus an integer. We computed 
\[
\begin{split}
&R\bigl(R'(G_{1+\tau+\tau^3},G_{1+2\tau}), R'(G_{1+\tau+\tau^3},G_{2+3\tau})\bigr),\cr
&R\bigl(R'(G_{1+\tau+\tau^3},G_{1+2\tau}), R'(G_{1+2\tau},G_{2+3\tau})\bigr),\cr
&R\bigl(R'(G_{1+\tau+\tau^3},G_{1+2\tau}), R'(G_{2+3\tau},G_{3+4\tau})\bigr),
\end{split}
\]
each of which is a very large integer. However, the gcd of the above three integers is $2\cdot 3^2\cdot 5$, which is nonzero in $\f_p$. Thus we have a contradiction.
\end{proof}

\noindent{\bf Remark.}
In fact, we computed 
\begin{equation}\label{4.40}
R(R'(G_{\alpha_1},G_{\beta_1}), R'(G_{\alpha_2},G_{\beta_2}))
\end{equation}
for all $\alpha_1,\beta_1,\alpha_2,\beta_2\in \{1+\tau+\tau^3,\, 1+2\tau,\, 2+3\tau,\, 3+4\tau\}$ with $\alpha_1\ne\beta_1$, $\alpha_2\ne\beta_2$ and $\{\alpha_1,\beta_1\}\ne\{\alpha_2,\beta_2\}$. It turned out that the gcd of all such integers is $2\cdot 3^2\cdot 5$. Therefore, consideration of all the resultants in \eqref{4.40} still cannot eliminate the possibilities of $p=2,3,5$.

\medskip
The following corollary is immediate from Lemma~\ref{L4.7}

\begin{cor}\label{C4.8}
Fix $r\ge 3$ and a prime $p$ with $p\notin\{2,3,5\}$. Then there are only finitely many $q$ with $\text{\rm char}\,\f_q=p$ and $q\equiv-1\pmod r$ for which $f$ is a PP of $\f_{q^2}$, where $a^{q+1}\ne 1$ ($a\in\f_{q^2}^*$).
\end{cor}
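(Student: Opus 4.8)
The plan is to obtain this as a direct consequence of Lemma~\ref{L4.7}, the only subtlety being to fix, once and for all, an auxiliary prime power $\tau$ that can serve in that lemma for every admissible $q$. First I would treat the trivial case: if no power of $p$ is congruent to $-1$ modulo $r$, then there is no prime power $q$ with $\text{char}\,\f_q=p$ and $q\equiv-1\pmod r$, so the assertion holds vacuously. Otherwise, let $S$ be the (nonempty) set of powers of $p$ that are congruent to $-1\pmod r$, and set $\tau=\min S$; then $\tau>1$ is a fixed quantity depending only on $p$ and $r$, and in particular $\tau^4$ is a fixed bound.

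Next I would run the dichotomy on the size of $q$. Suppose $q$ is a prime power with $\text{char}\,\f_q=p$, $q\equiv-1\pmod r$, and $f=a\x+\x^{r(q-1)+1}$ a PP of $\f_{q^2}$ for some $a\in\f_{q^2}^*$ with $a^{q+1}\ne1$. If $q\ge\tau^4$, then all hypotheses of Lemma~\ref{L4.7} are satisfied with this $\tau$, so $p\in\{2,3,5\}$, contradicting $p\notin\{2,3,5\}$. Hence necessarily $q<\tau^4$, so only finitely many $q$ can occur.

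Essentially all the mathematical weight sits in Lemma~\ref{L4.7}; the corollary is little more than a reformulation. The one thing requiring a moment's thought — and the closest thing to an obstacle — is seeing that the auxiliary $\tau$ in Lemma~\ref{L4.7} may be chosen uniformly in $q$: that is exactly what picking the least element of $S$ accomplishes, and it is harmless because the large values of $q$ (the only ones not already covered by the bound $q<\tau^4$) are precisely the ones the lemma handles. I do not anticipate any genuine difficulty here.
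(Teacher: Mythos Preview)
Your proposal is correct and matches the paper's approach: the paper states that Corollary~\ref{C4.8} is immediate from Lemma~\ref{L4.7}, and you have simply spelled out why, by fixing the minimal admissible $\tau$ and bounding $q$ by $\tau^4$. There is nothing to add.
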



\subsection{The cases $p=2,3,5$}\

\begin{lem}\label{L4.9} 
Fix $r\ge 3$, and let $p\in\{2,3,5\}$. Fix a power $\tau$ ($>2$) of $p$ such that $\tau\equiv -1\pmod r$, assuming that such a power exists. Let $q$ be a power of $p$ with $q\equiv -1\pmod r$. If $q\ge r\tau^4$, then $f$ is not a PP of $\f_{q^2}$, where $a^{q+1}\ne1$ ($a\in\f_{q^2}^*$).
\end{lem}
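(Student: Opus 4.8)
The plan is to run the same resultant-elimination machinery that powered Lemma~\ref{L4.7}, but now specialized to a single prime $p\in\{2,3,5\}$ and a fixed power $\tau$ of $p$ with $\tau\equiv-1\pmod r$; the extra hypothesis $q\ge r\tau^4$ guarantees both that $q\ge\tau^4$ (so the common-root analysis of Lemmas~\ref{L4.1}, \ref{L4.3} applies with this $\tau$) and that $q\ge(r-1)(\alpha+2)$ for the small exponents $\alpha\in\{\tau,1+2\tau,2+3\tau,3+4\tau,1+\tau+\tau^3,\tau^4-2\}$ we intend to use, so that Lemma~\ref{L2.4} expresses each power sum via $g_\alpha(v)$. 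First I would argue by contradiction: assuming $f$ is a PP, the vanishing of the relevant power sums forces $g_\alpha$, for all these $\alpha$, to have a common root $x\in\overline{\f}_p\setminus\{0\}$ with $x^r\in\f_\tau^*\setminus\{1\}$, and Lemma~\ref{L4.1} then yields $(r+1)x-r\ne0$ together with the rational relation \eqref{4.1}, $y:=x^r=x(rx+1-r)/((r+1)x-r)$.

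Next I would substitute $k=1/r$ (legitimate since $p\nmid r$, as $\tau\equiv-1\pmod r$ with $\tau$ a power of $p$) and $y=x(rx+1-r)/((r+1)x-r)$ into the closed forms for $g_{1+\tau+\tau^3}(x)$, $g_{1+2\tau}(x)$, $g_{2+3\tau}(x)$, $g_{3+4\tau}(x)$ provided by Lemmas~\ref{L4.2}--\ref{L4.3}, obtaining the polynomial identities \eqref{4.32}--\eqref{4.35} with the explicit $G_\alpha(\x)$ of \eqref{4.36}--\eqref{4.39}; since $x\ne0$, $x\ne1$, and $(r+1)x-r\ne0$, the vanishing of each $g_\alpha(x)$ is equivalent to $G_\alpha(x)=0$. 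Thus $x$ is a common root, in $\overline{\f}_p$, of these four polynomials $G_\alpha$, now regarded as polynomials in $\x$ over $\f_p$ after reducing their integer-polynomial-in-$r$ coefficients mod $p$ and substituting the fixed value of $r$ in $\f_p$. The point is that $r\bmod p$ is now a fixed element of $\f_p$, so each $G_\alpha$ is a concrete polynomial in $\f_p[\x]$, and the existence of a common root means the pairwise resultants $R(G_{\alpha},G_{\beta})$ all vanish in $\f_p$.

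The core computational step is then to show this is impossible. For each $p\in\{2,3,5\}$ I would compute, for the relevant pairs $(\alpha,\beta)$, the resultants $R(G_\alpha,G_\beta)\in\f_p[r]$ — these are the mod-$p$ reductions of the integer polynomials in $r$ already displayed in Lemma~\ref{L4.7} (after removing the spurious factors $r$, $r-1$, $r-2$, which are excluded since $r\ge3$ and $p\nmid r$ means $r\not\equiv0\pmod p$, while $r\equiv1$ or $r\equiv2\pmod p$ can be treated separately as in step $1^\circ$ of the proof of Lemma~\ref{L4.7}). If some reduced resultant $R'(G_\alpha,G_\beta)$ is a nonzero polynomial in $\f_p[r]$ having $r\bmod p$ as a non-root, we are done; the subtlety is that mod $2$, $3$, or $5$ these polynomials in $r$ \emph{might} vanish at the particular residue of $r$, which is exactly why the previous resultant-of-resultants argument (whose gcd was $2\cdot3^2\cdot5$) failed to kill $p=2,3,5$. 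So the genuine obstacle — and the heart of this lemma — is handling those residue classes $r\bmod p$ for which all the two-variable resultants degenerate: there one must bring in additional exponents $\alpha$ (further $g_\alpha$ with $\alpha\equiv-1\pmod r$, e.g.\ involving higher powers of $\tau$), derive the corresponding $G_\alpha$ via Lemma~\ref{L4.2}, and verify by a finite gcd computation in $\f_p[\x]$ (with $r$ now a fixed residue) that $\gcd(G_{\alpha_1},\dots,G_{\alpha_s})=1$, contradicting the common root $x$. Once every residue class of $r$ modulo each of $2$, $3$, $5$ is eliminated this way, we conclude $f$ cannot be a PP of $\f_{q^2}$ when $q\ge r\tau^4$, as claimed.
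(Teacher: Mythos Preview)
Your plan works for $p=5$ and is close to the paper there, but has a real gap at $p=2$ (and, to a lesser extent, $p=3$). The step that breaks is the assertion that ``the vanishing of each $g_\alpha(x)$ is equivalent to $G_\alpha(x)=0$.'' The identities \eqref{4.33}--\eqref{4.35} read $2g_{1+2\tau}(x)=(\cdots)G_{1+2\tau}(x)$, $12g_{2+3\tau}(x)=(\cdots)G_{2+3\tau}(x)$, $144g_{3+4\tau}(x)=(\cdots)G_{3+4\tau}(x)$; when $p$ divides the left-hand integer the identity degenerates to $0=0$ in $\f_p$, and indeed for $p=2$ (where $r$ is odd) one checks directly that $G_{1+2\tau}$, $G_{2+3\tau}$, $G_{3+4\tau}$ all reduce to the zero polynomial in $\f_2[\x]$. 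So for $p=2$ the only surviving constraint from your list is $G_{1+\tau+\tau^3}(\x)=\x(1+\x+\x^2)$, which alone cannot force a contradiction.

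Your proposed fallback --- more exponents $m-1+m\tau$ via Lemma~\ref{L4.2} --- runs into the same wall: the passage from the closed form \eqref{4.11} to a polynomial $G_\alpha(\x)$ of the type in \eqref{4.37}--\eqref{4.39} always clears a factor of $(m-1)!\,m!$ (this is exactly where the $2$, $12$, $144$ come from), so every such $G_\alpha$ vanishes identically in $\f_2$ for $m\ge 2$. What the paper actually does for $p=2$ is a $2$-adic lift: it lifts $x$ to $\frak x\in\mathcal O_F$ in an unramified extension $F/\Bbb Q_2$, computes the expression corresponding to $g_{2+3\tau}$ over $\mathcal O_F$ (where division by $12$ is harmless), exploits the stronger congruence $k\equiv 1/r\pmod{4}$ (not merely $\pmod 2$) so that the binomial coefficients in \eqref{4.11} survive the substitution $k\mapsto 1/r$, and only then reduces mod $2$. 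The output is $L(\x)=\tfrac{1}{12}G_{2+3\tau}(\x)\in(\Bbb Z[1/6])[\x]$, whose reduction in $\f_2[\x]$ is nonzero and coprime to $1+\x+\x^2$, yielding the contradiction. This lifting step is the essential idea your proposal is missing. (For $p=3$ there is a smaller version of the same problem: $G_{2+3\tau}$ and $G_{3+4\tau}$ vanish in $\f_3$, and when $r\equiv 1\pmod 3$ the two survivors $G_{1+\tau+\tau^3}$ and $\tfrac12 G_{1+2\tau}$ share the factor $\x^2+1$; the paper sidesteps this not by adding exponents but by feeding $x^2=-1$ directly back into the rational relation \eqref{4.1}.)
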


\begin{proof}
Assume to the contrary that $f$ is a PP of $\f_{q^2}$. It follows from Lemmas~\ref{L2.4} and \ref{L4.1} that $g_\alpha$, $\alpha\in\{\tau,\, 1+2\tau,\, 2+3\tau,\, 3+4\tau,\, 1+\tau+\tau^3,\, -2+\tau^4\}$ have a common root $x\in\overline\f_p\setminus\{0\}$ with $x^r\in\f_\tau^*\setminus\{1\}$ and $(r+1)x-r\ne 0$ that satisfies \eqref{4.1}.

\medskip
{\bf Case 1.} Assume $p=2$. Clearly, $r=1$ in $\f_2$. It follows that 
\[
G_{1+\tau+\tau^3}(\x)=\x(1+\x+\x^2).
\]
Since $p=2$, $G_{1+2\tau}$, $G_{2+3\tau}$ and $G_{3+4\tau}$ in \eqref{4.36} -- \eqref{4.38} are all $0$, due to the fact that they arise from $2g_{1+2\tau}(x)$, $12g_{2+3\tau}(x)$ and $144g_{3+4\tau}(x)$, respectively. It is possible to compute new polynomials arising in a similar way from $g_{1+2\tau}(x)$, $g_{2+3\tau}(x)$ and $g_{3+4\tau}(x)$. The result from $g_{1+2\tau}(x)$ is still $0$. The results from $g_{2+3\tau}(x)$, whose computation is detailed below, will be useful.

Let $k=(\tau+1)/r$ and $y=x^r$. By \eqref{4.11} (with $m=3$),
\begin{equation}\label{4.41}
\begin{split}
g_{2+3\tau}(x)=\,&(x-1)\sum_{\substack{0\le i_0\le 2\cr 0\le i_1\le 3}}(-1)^{i_0+i_1}\binom 2{i_0}\binom 3{i_1}\frac{y^{i_0+i_1}-1}{y-1}\cr
&\cdot\Bigl[\sum_{l=0}^{r-1}\binom{i_0+2+(l-3)k}2\binom{i_1+2-lk}3x^l\cr
&-\sum_{l=0}^2\binom{i_0+2+(l-3)k}2\binom{i_1+2-lk}2x^l\Bigr]-1.
\end{split}
\end{equation}

Let $F$ be a finite degree unramified extension of $\Bbb Q_2$ such that the finite field $\mathcal O_F/2\mathcal O_F$ contains $x$, where $\mathcal O_F$ is the ring of integers of $F$. Let $\frak x\in\mathcal O_F$ be a lift of $x$, and set $\frak y=\frak x^r$. Then 
\begin{equation}\label{4.42}
\frak y=\frac{\frak x(r\frak x+1-r)}{(r+1)\frak x-r}\pmod 2.
\end{equation}
Let $\frak g\in\mathcal O_F$ denote the expression obtained from \eqref{4.41} with $x$ and $y$ replaced by $\frak x$ and $\frak y$, respectively. Then $\frak g$ can be computed in a manner similar to \eqref{4.25}. We have
\begin{equation}\label{4.43}
\begin{split}
\frak g=\,&(\frak x-1)\sum_{\substack{0\le i_0\le 2\cr 0\le i_1\le 3}}(-1)^{i_0+i_1}\binom 2{i_0}\binom 3{i_1}\frac{\frak y^{i_0+i_1}-1}{\frak y-1}\cr
&\cdot\Bigl[a_0s_0(\frak x)+a_1s_1(\frak x)+a_2s_2(\frak x)+a_3s_3(\frak x)+a_4s_4(\frak x)+a_5s_5(\frak x)\cr
&-\sum_{l=0}^2\binom{i_0+2+(l-3)k}2\binom{i_1+2-lk}2\frak x^l\Bigr]-1,
\end{split}
\end{equation} 
where $s_i$ is given by \eqref{4.23} and \eqref{4.24}, and
\begin{equation}\label{4.44}
\begin{split}
a_0=\,& \frac{i_0^2 i_1^3}{12}+\frac{i_0^2 i_1^2}{2}+\frac{11 i_0^2
   i_1}{12}+\frac{i_0^2}{2}-\frac{1}{2} i_0 i_1^3 k+\frac{i_0 i_1^3}{4}-3 i_0 i_1^2
   k+\frac{3 i_0 i_1^2}{2}-\frac{11 i_0 i_1 k}{2}\cr
&+\frac{11 i_0 i_1}{4}-3 i_0 k+\frac{3
   i_0}{2}+\frac{3 i_1^3 k^2}{4}-\frac{3 i_1^3 k}{4}+\frac{i_1^3}{6}+\frac{9 i_1^2
   k^2}{2}-\frac{9 i_1^2 k}{2}+i_1^2\cr
&+\frac{33 i_1 k^2}{4}-\frac{33 i_1
   k}{4}+\frac{11 i_1}{6}+\frac{9 k^2}{2}-\frac{9 k}{2}+1, \cr
a_1= \,& -\frac{1}{4} i_0^2 i_1^2 k-i_0^2 i_1 k-\frac{11 i_0^2 k}{12}+\frac{1}{6} i_0 i_1^3
   k+\frac{3}{2} i_0 i_1^2 k^2+\frac{1}{4} i_0 i_1^2 k+6 i_0 i_1 k^2-\frac{7 i_0 i_1
   k}{6}\cr
&+\frac{11 i_0 k^2}{2}-\frac{7 i_0 k}{4}-\frac{1}{2} i_1^3 k^2+\frac{i_1^3
   k}{4}-\frac{9 i_1^2 k^3}{4}-\frac{3 i_1^2 k^2}{4}+i_1^2 k-9 i_1 k^3+\frac{7 i_1
   k^2}{2}\cr
&+\frac{3 i_1 k}{4}-\frac{33 k^3}{4}+\frac{21 k^2}{4}-\frac{k}{3}, \cr
a_2= \,& \frac{1}{4} i_0^2 i_1 k^2+\frac{i_0^2 k^2}{2}-\frac{1}{2} i_0 i_1^2 k^2-\frac{3}{2}
   i_0 i_1 k^3-\frac{5}{4} i_0 i_1 k^2-3 i_0 k^3-\frac{i_0 k^2}{3}+\frac{i_1^3
   k^2}{12}\cr
&+\frac{3 i_1^2 k^3}{2}-\frac{i_1^2 k^2}{4}+\frac{9 i_1
   k^4}{4}+\frac{15 i_1 k^3}{4}-\frac{19 i_1 k^2}{12}+\frac{9
   k^4}{2}+k^3-\frac{5 k^2}{4}, \cr
a_3= \,& -\frac{i_0^2 k^3}{12}+\frac{1}{2} i_0 i_1 k^3+\frac{i_0 k^4}{2}+\frac{3 i_0
   k^3}{4}-\frac{i_1^2 k^3}{4}-\frac{3 i_1 k^4}{2}-\frac{i_1 k^3}{4}-\frac{3
   k^5}{4}-\frac{9 k^4}{4}+\frac{5 k^3}{12}, \cr
a_4= \,& -\frac{i_0 k^4}{6}+\frac{i_1 k^4}{4}+\frac{k^5}{2}+\frac{k^4}{4}, \cr
a_5= \,& -\frac{k^5}{12}.
\end{split}
\end{equation}
Applying \eqref{4.24} and \eqref{4.44} to \eqref{4.43} gives
\begin{equation}\label{4.45}
\frak g=\frac 1{12(\frak x-1)^5}H(k,r,\frak x,\frak y),
\end{equation}
where $H(k,r,\frak x,\frak y)$ is a polynomial in $k,r,\frak x,\frak y$ with integer coefficients. (The expression $H(k,r,\frak x,\frak y)$ is too lengthy to be included here but can be easily generated with computer assistance.) Since $\tau>2$, we have $2^2\mid \tau$, and hence $k=(\tau+1)/r\equiv 1/r\pmod{2^2}$. So, the binomial coefficients in \eqref{4.41} remain the same modulo $2$ when $k$ is replaced by $1/r$. Therefore, 
\begin{equation}\label{4.46}
\begin{split}
\frak g\,&\equiv\frac 1{12(\frak x-1)^5}H(1/r,r,\frak x,\frak y)  \pmod 2\cr
&=\frac 1{r^5(\frak x-1)^5}H_1(r,\frak x,\frak y),
\end{split}
\end{equation}
where $H_1(r,\frak x,\frak y)=(r^5/12)H(1/r,r,\frak x,\frak y)$ is a polynomial in $\frak x$ and $\frak y$ whose coefficients are of the form $\frac 16C(r)$ with $C\in\Bbb Z[\x]$ and $\frac 16C(\Bbb Z)\subset\Bbb Z$. Making the substitution \eqref{4.42} in \eqref{4.46} gives
\begin{equation}\label{4.47}
\frak g\equiv\frac{(\frak x-1)^2}{\bigl((r+1)\frak x-r\bigr)^5}L(\frak x)\pmod 2,
\end{equation}
where
\begin{align*}
L(\x)=&
\frac16\Bigl[
\bigl(480 r^5-56 r^4-612 r^3+574 r^2-210
   r+4\bigr) \x^8\cr
&+\bigl(-2904 r^5+2390 r^4+2655 r^3-4033 r^2+2004
   r-238\bigr) \x^7\cr
&+\bigl(7464 r^5-11148 r^4-1230 r^3+9711 r^2-6477
   r+1266\bigr) \x^6\cr
&+\bigl(-10572 r^5+22426 r^4-9387 r^3-8690 r^2+9177
   r-2414\bigr) \x^5\cr
&+\bigl(8940 r^5-24118 r^4+19116 r^3-502 r^2-5736
   r+1958\bigr) \x^4\cr
&+\bigl(-4560 r^5+14658 r^4-15783 r^3+5571 r^2+1002
   r-762\bigr) \x^3\cr
&+\bigl(1344 r^5-4928 r^4+6354 r^3-3305 r^2+375
   r+142\bigr) \x^2\cr
&+\bigl(-204 r^5+830 r^4-1197 r^3+728 r^2-147
   r-10\bigr) \x \cr
&+12 r^5-54 r^4+84 r^3-54 r^2 +12 r
\Bigr]
\end{align*}

The coefficients of $L$ are also of the form $\frac 16C(r)$ with $C\in\Bbb Z[\x]$ and $\frac 16C(\Bbb Z)\subset\Bbb Z$. These coefficients, modulo $2$, only depend on $r$ modulo $4$. The reduction of \eqref{4.47} in $\mathcal O_F/2\mathcal O_F$ is
\begin{equation}\label{4.48}
g_{2+3\tau}(x)=\frac{(x-1)^2}{\bigl((r+1)x-r\bigr)^5}L(x).
\end{equation}  
(In fact, $L=\frac1{12}G_{2+3\tau}$ in $\Bbb Q[\x]$, where $G_{2+3\tau}$ is given in \eqref{4.38}. For $p=2$, \eqref{4.48} implies \eqref{4.34} but not the converse.)
We find that
\[
L(\x)=
\begin{cases}
\x^2(1+\x+\x^2+\x^4+\x^5)&\text{if}\ r\equiv 1\pmod 4,\cr
\x^4&\text{if}\ r\equiv -1\pmod 4,
\end{cases}
\]
and we always have
\[
\text{gcd}_{\f_2[\x]}(1+\x+\x^2,L)=1,
\]
This is a contradiction since $G_{1+\tau+\tau^3}(\x)=\x(1+\x+\x^2)$ and $L(\x)$ have a nonzero common root $x$.

\medskip
{\bf Case 2.} Assume $p=3$. First assume $r\equiv 1\pmod 3$. We have
\[
G_{1+\tau+\tau^3}(\x)=\x-\x^2+\x^3-\x^4=\x(1-\x)(\x^2+1).
\]
It follows that $x^2=-1$. Then $x^r=\pm 1$ or $\pm x$. By \eqref{4.1} we also have 
\[
x^r=\frac{x^2}{-x-1}=\frac 1{x+1}.
\]
If $x^r=\pm 1$, then $x+1=\pm 1$, and hence $x\in\f_3$. Thus $x^2\ne -1$, which is a contradiction. If $x^r=\pm x$, then $x+1=\mp x$, which forces $x=1$, which is also a contradiction.

Now assume $r\equiv-1\pmod 3$. We have $G_{1+\tau+\tau^3}(\x)=-\x+\x^2=\x(\x-1)$. This is impossible since $x\ne 0,1$.

\medskip
{\bf Case 3.} Assume $p=5$.

\medskip
{\bf Case 3.1.} Assume $r\equiv 1\pmod 5$. We have
\[
\begin{split}
G_{1+\tau+\tau^3}(\x)\,&=\x(1+3\x+\x^2+2\x^3),\cr
\frac 12G_{1+2\tau}(\x)\,&=\x(1+2\x+3\x^2),
\end{split}
\]
and 
\[
\text{gcd}_{\f_5[\x]}(1+3\x+\x^2+2\x^3,\; 1+2\x+3\x^2)=1.
\]
Hence we have a contradiction.

\medskip
{\bf Case 3.2.} Assume $r\equiv 2\pmod 5$. We have
\[
\begin{split}
\frac 12G_{1+2\tau}(\x)\,&=4+3\x,\cr
G_{2+3\tau}(\x)\,&=\x(4+4\x+3\x^2+\x^4+\x^5+3\x^7),
\end{split}
\]
and 
\[
\text{gcd}_{\f_5[\x]}(4+3\x,\; 4+4\x+3\x^2+\x^4+\x^5+3\x^7)=1.
\]
Hence we have a contradiction. (Note. In this case, $G_{1+\tau+\tau^3}(\x)=\x(1-\x)(4+3\x)$, which is a multiple of $G_{1+2\tau}(\x)$.)

\medskip
{\bf Case 3.3.} Assume $r\equiv -1\pmod 5$. We have $\frac 12G_{1+2\tau}(\x)=\x(\x-2)$. Thus $x=2$. However, by \eqref{4.1},
\[
x^r=\frac{x(rx+1-r)}{\bigl((r+1)x-r\bigr)}=0,
\]
which is a contradiction.

\medskip
{\bf Case 3.4.} Assume $r\equiv -2\pmod 5$. We have
\[
\begin{split}
G_{1+\tau+\tau^3}(\x)\,&=2+\x,\cr
\frac 12G_{1+2\tau}(\x)\,&=2+2\x+4\x^2,
\end{split}
\]
and 
\[
\text{gcd}_{\f_5[\x]}(2+\x,\; 2+2\x+4\x^2)=1.
\]
Hence we have a contradiction.
\end{proof}

In conclusion, Theorem~\ref{T1.5} follows from Lemma~\ref{L4.6}, Corollary~\ref{C4.8} and Lemma~\ref{L4.9}.


\end{document}